\newtheoremstyle{Teorema}{10pt}{10pt}{\it}{}{\bf}{. }{ }{}
\theoremstyle{Teorema}
\newtheorem{Theorem}{Theorem}[section]
\newtheorem{Corollary}[Theorem]{Corollary}
\newtheorem{Proposition}[Theorem]{Proposition}
\newtheorem{Definition}[Theorem]{Definition}
\newtheorem{Lemma}[Theorem]{Lemma}
\newtheoremstyle{PseudoHeading}{10pt}{10pt}{}{}{\bf}{. }{ }{}
\theoremstyle{PseudoHeading}
\def\Hom{\operatorname{Hom}}
\def\Aut{\operatorname{Aut}}
\def\End{\operatorname{End}}
\def\SL{\operatorname{SL}}
\def\GL{\operatorname{GL}}
\def\PSL{\operatorname{PSL}}
\def\PGL{\operatorname{PGL}}
\def\bbn{\mathbb{N}}
\def\bbz{\mathbb{Z}}
\def\bbr{\mathbb{R}}
\def\bbq{\mathbb{Q}}
\def\bbc{\mathbb{C}}
\def\bbp{\mathbb{P}}
\def\bbone{\mathbf{1}}
\def\cardop{\operatorname{card}}
\begin{document}
\title{Enumerating Trees}
\author{Robert A. Kucharczyk}
\address{Universit\"{a}t Bonn\\
Mathematisches Institut\\
Endenicher Allee 60\\
D-53115 Bonn\\
Germany}
\email{rak@math.uni-bonn.de}
\thanks{The author acknowledges financial support by the Max-Planck-Institut f\"{u}r Mathematik in Bonn}

\begin{abstract}
In this note we discuss trees similar to the Calkin-Wilf tree, a binary tree that enumerates all positive rational numbers in a simple way. The original construction of Calkin and Wilf is reformulated in a more algebraic language, and an elementary application of methods from analytic number theory gives restrictions on possible analogues.
\end{abstract}
\maketitle
\tableofcontents

\section{The Calkin-Wilf Tree}

\noindent In \cite{CalkinWilf}, Neil Calkin and Herbert Wilf introduced a remarkably beautiful\footnote{It was considered worthy by the authors of \cite{AignerZiegler09} to be included into their BOOK.} way to enumerate the positive rational numbers, drawing together several observations by Stern \cite{Stern1858} and Reznick \cite{Reznick90}. The enumeration is along a binary tree in the sense of computer science, i.e. an infinite rooted tree in which each node has two children\footnote{By the recursive procedure for constructing the tree, it seems natural to use the family metaphor in this direction. Since this is the usual terminology, we stick to it. The reverse direction would be somewhat more fitting, though, since (at least by the current state of art in reproductive medicine) everybody has precisely two parents, one of which is ``male'' and one of which is ``female''. But to produce children, you need a partner, and their number is generally not fixed to two. In either direction, an infinite chain appears problematic, although there can be little doubt that Thomas Aquinas would have preferred an infinite sequence of children.}, one of which is called ``left'' and the other ``right''. This naming should be considered not just as a device for drawing the tree, but rather as part of the mathematical structure.

Here comes its construction. The nodes of the tree are labelled by positive rational numbers. For ease of notation, we write each such number in the form $\frac{p}{q}$ with $p,q\in\bbn\smallsetminus \{ 0\}$ coprime. The rule for labelling is recursive: the tree's root is labelled by $\frac{1}{1}$. If a node is labelled $\frac{p}{q}$, then its left child bears the label $\frac{p}{p+q}$ and its right child bears the label $\frac{p+q}{q}$. By induction we directly see that these are reduced fractions as written.

Before proving and stating the basic properties of this tree, we encourage the reader to contemplate Table \ref{CWBild} where the first few layers are shown.

\begin{table}\label{CWBild}
\begin{center}
\caption{The first five layers of the Calkin-Wilf tree}
\begin{tikzpicture}
[every node/.style={draw,circle,inner sep=2pt},
level 1/.style={sibling distance=80mm},
level 2/.style={sibling distance=40mm},
level 3/.style={sibling distance=20mm},
level 4/.style={sibling distance=10mm}]
\node {$\dfrac{1}{1}$}
	child {node {$\dfrac{1}{2}$}
		child {node {$\dfrac{1}{3}$}
			child {node {$\dfrac{1}{4}$}
				child {node {$\frac{1}{5}$}}
				child {node {$\frac{5}{4}$}}
			}
			child {node {$\dfrac{4}{3}$}
				child {node {$\frac{4}{7}$}}
				child {node {$\frac{7}{3}$}}
			}
		}
		child {node {$\dfrac{3}{2}$}
			child {node {$\dfrac{3}{5}$}
				child {node {$\frac{3}{8}$}}
				child {node {$\frac{8}{5}$}}
			}
			child {node {$\dfrac{5}{2}$}
				child {node {$\frac{5}{7}$}}
				child {node {$\frac{7}{2}$}}
			}
		}
	}
	child {node {$\dfrac{2}{1}$}
		child {node {$\dfrac{2}{3}$}
			child {node {$\dfrac{2}{5}$}
				child {node {$\frac{2}{7}$}}
				child {node {$\frac{7}{5}$}}
			}
			child {node {$\dfrac{5}{3}$}
				child {node {$\frac{5}{8}$}}
				child {node {$\frac{8}{3}$}}
			}
		}
		child {node {$\dfrac{3}{1}$}
			child {node {$\dfrac{3}{4}$}
				child {node {$\frac{3}{7}$}}
				child {node {$\frac{7}{4}$}}
			}
			child {node {$\dfrac{4}{1}$}
				child {node {$\frac{4}{5}$}}
				child {node {$\frac{5}{1}$}}
			}
		}
	};
\end{tikzpicture}
\end{center}
\end{table}

\begin{Proposition}[Calkin-Wilf]\label{CWProposition}
In the Calkin-Wilf tree, every positive rational appears exactly once.
\end{Proposition}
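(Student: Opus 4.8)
The plan is to show that the labelling function $\ell$, which assigns to each node of the tree its positive rational label, is a bijection from the node set onto $\bbq_{>0}$. This splits into surjectivity (every positive rational occurs as a label) and injectivity (no rational occurs twice), and both halves proceed by strong induction on the natural number $p+q$, where $\frac pq$ denotes the reduced fraction under consideration; observe that $p+q\geq 2$, with equality precisely for $\frac 11$.

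For surjectivity, the root takes care of the case $p+q=2$. Suppose $\frac pq$ is reduced with $p+q>2$; then $p\neq q$ by coprimality. If $p<q$, the fraction $\frac{p}{q-p}$ is again reduced (as $\gcd(p,q-p)=\gcd(p,q)=1$) and has strictly smaller numerator-plus-denominator, so by the inductive hypothesis it labels some node $v$; the left child of $v$ then carries the label $\frac{p}{p+(q-p)}=\frac pq$. If instead $p>q$, the same argument applied to $\frac{p-q}{q}$ and a right child produces a node labelled $\frac pq$.

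For injectivity the crucial auxiliary fact is a \emph{sign invariant}: a left child always has numerator smaller than denominator and a right child always has numerator larger than denominator, since $p<p+q$ and $p+q>q$ for positive integers $p,q$. In particular $\frac11$ labels only the root, and for any non-root node the comparison of numerator and denominator of its label decides whether it is a left or a right child and pins down its parent's label, namely $\frac{p}{q-p}$ in the left case and $\frac{p-q}{q}$ in the right case. Now suppose $v$ and $w$ both carry the label $\frac pq$. If $p+q=2$ then $\frac pq=\frac11$ and both equal the root. If $p+q>2$ then $p\neq q$; assume $p<q$, the other case being symmetric. Then neither node is the root, both are left children by the sign invariant, and their parents $v',w'$ both carry the label $\frac{p}{q-p}$; since $p+(q-p)=q<p+q$, the inductive hypothesis gives $v'=w'$, whence $v=w$ because a node has a unique left child.

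The only delicate point is the sign invariant together with its corollary that $\frac 11$ labels the root and nothing else; the rest is a finite descent along the $(p+q)$-decreasing parent map, which is just the subtractive form of the Euclidean algorithm. A more algebraic route, closer to the reformulation promised in the abstract, would model the two child operations as the matrices $\bigl(\begin{smallmatrix}1&0\\1&1\end{smallmatrix}\bigr)$ and $\bigl(\begin{smallmatrix}1&1\\0&1\end{smallmatrix}\bigr)$ acting on the column vector $\binom pq$, so that the nodes at depth $n$ correspond to length-$n$ words in these two matrices applied to $\binom 11$, and then deduce the bijection from the uniqueness of the suitably normalised continued fraction expansion of a positive rational; but the elementary induction above is self-contained, and that is what I would write up.
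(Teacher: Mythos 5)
Your proof is correct and takes essentially the same route as the paper: descent via the subtractive Euclidean algorithm, using the observation that comparing numerator and denominator decides whether a node is the root, a left child, or a right child, which then pins down its unique parent. The only cosmetic differences are that you induct on $p+q$ rather than $\max(p,q)$ and split the argument explicitly into surjectivity and injectivity, whereas the paper runs a single induction establishing ``appears exactly once'' directly.
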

\begin{proof}
For ease of parlance, we confuse nodes with their labels.

Writing a positive rational as $p/q$ with $p,q$ coprime positive integers, we proceed by induction on $m=\max (p,q)$. For $m=1$ there is only $p=q=1$ to consider. The rational number $1/1=1$ does appear in the tree, namely at its root; it cannot occur anywhere else, since each left child $p/(p+q)$ is smaller than $1$ and each right child $(p+q)/q$ is bigger than $1$.

Assume now that the statement is proved for all $m<m_0$, and let $x=p/q$ with $\max (p,q)=m_0$. Then either $x<1$ or $x>1$. In the first case, we have $m_0=q>p$, hence $x$ is the left child of the (by assumption) unique node labelled $p/(q-p)$, and since it cannot be a right child (else $x>1$), it cannot occur at any other place. Similarly, if $x>1$, it must be a right child, and it must be the right child of $(p-q)/q$ which, by assumption, does occur exactly once.
\end{proof}
The proof already shows that the position of a positive rational $p/q$ can be determined by performing the Euclidean algorithm on $p$ and $q$. It is also clear that the continued fraction expansion of $p/q$ and the sequence of left / right moves one has to make from $1$ in order to get to $p/q$ are easily translated into one another.

So, if we write down the first line, then the second line, then the third line of the Calkin-Wilf tree and so on, we obtain a list of the positive rationals in which each of them appears exactly once, i.e. a bijection $\bbn_0\to\bbq_{>0}$. As can be checked from Table \ref{CWBild}, this list begins with
\begin{equation}\label{CWList}
\frac{1}{1},\frac{1}{2},\frac{2}{1},\frac{1}{3},\frac{3}{2},\frac{2}{3},\frac{3}{1},\frac{1}{4},\frac{4}{3},\frac{3}{5},\frac{5}{2},\frac{2}{5},\frac{5}{3},\frac{3}{4},\frac{4}{1},\frac{1}{5},\frac{5}{4},\frac{4}{7},\frac{7}{3},\frac{3}{8},\frac{8}{5},\frac{5}{7},\frac{7}{2},\frac{2}{7},\frac{7}{5},\frac{5}{8},\frac{8}{3},\frac{3}{7},\frac{7}{4},\frac{4}{5},\frac{5}{1},\ldots
\end{equation}
The attentive reader will long have noticed that the denominator of each term is equal to the numerator of its successor. This can easily be proved by induction. Hence there must be a function $f :\bbn_0\to\bbn$ such that $f(n)$ and $f(n+1)$ are coprime, and the $n$-th element of the sequence (\ref{CWList}) is equal to $f(n)/f(n+1)$. It is proved in \cite{CalkinWilf} that $f(n)$ is the number of ways to partition $n$ into powers of two, each power occurring at most twice.

Moshe Newman also has found a simple recursive construction of the sequence (\ref{CWList}) that does not make reference to the tree anymore: it is the sequence $(a_n)$ with $a_0=1$ and
\begin{equation*}
a_{n+1}=\frac{1}{1+\lfloor a_n\rfloor -\{ a_n\} }.
\end{equation*}
Here, $\lfloor a_n\rfloor$ is the largest integer $\le a_n$ and $\{ a_n\} =a_n-\lfloor a_n\rfloor$ is the ``fractional part'' of $a_n$. This was a solution to a problem raised by Donald Knuth in the American Mathematical Monthly, see \cite{Knuth}.

For more details, and further interesting developments in directions not touched upon in this paper, see \cite{BatesMansour}, \cite{BergstraTucker}, and \cite{Northshield}.

We wish to look upon the Calkin-Wilf tree from another point of view: that of M\"{o}bius transformations. Recall that the group of M\"{o}bius transformations over a field $K$ is the group $\PGL_2(K)=\GL_2(K)/K^{\times }$. We introduce the following notation:
$$\begin{bmatrix}
a&b\\
c&d
\end{bmatrix}$$
is the element of $\PGL_2(K)$ represented by
$$\begin{pmatrix}
a&b\\
c&d
\end{pmatrix}\in\GL_2(K).$$
These M\"{o}bius transformations operate upon $\bbp^1(K) =K\cup\{\infty\}$ in the well-known way
$$\begin{bmatrix}
a&b\\
c&d
\end{bmatrix}\cdot z=\frac{az+b}{cz+d}.$$
The subgroup $\PSL_2(\bbz )=\SL_2(\bbz )/\{\pm\bbone\}$ of $\PGL_2(\bbq )$ has been much investigated, and it operates transitively on $\bbp^1(\bbq )$. A closer look at the rules generating the Calkin-Wilf tree shows that if a node is labelled by $x\in\bbq_{>0}\subset\bbp^1(\bbq )$, then its left child is labelled by $L(x)$ and its right child by $R(x)$, where
\begin{equation}
L=\begin{bmatrix}1&0\\ 1&1\end{bmatrix}\text{ and }R=\begin{bmatrix}1&1\\ 0&1\end{bmatrix}.
\end{equation}
These choices may at first glance look arbitrary, but we shall argue in the next section that they are not.

\section{The Monoid $\SL_2(\bbn_0)$}

\noindent Most of the literature on M\"{o}bius transformations deals with groups of them, but here we shall be concerned with monoids. Since this term is somewhat ambigous, let us fix a definition:
\begin{Definition}
A \emph{monoid} is a set $M$ together with a binary operation $\cdot :M\times M\to M$ with the following properties:
\begin{enumerate}
\item it is associative, i.e. $x(yz)=(xy)z$ for any $x,y,z\in S$, and
\item there exists an identity element, i.e. an element $e\in M$ such that $ex=x=xe$ for all $x\in M$.
\end{enumerate}
\end{Definition}
Such an identity element is necessarily unique.

As usual in algebra, one can now introduce free monoids. If $A$ is a set (considered as an ``alphabet''), then the free monoid\footnote{Friends of abstract nonsense will immediately recognize that this is equivalent to the definition in terms of an adjoint functor to the forgetful functor to sets that they sure would have proposed.} $\mathrsfs{F}(A)$ generated by $A$ consists of all formal words of finite length in the alphabet $A$. Multiplication is given by concatenation. The empty word $\varnothing$ is allowed and serves as the identity element in $\mathrsfs{F}(A)$.

If $M$ is a monoid and $A\subseteq M$ a subset, we say that $M$ is \emph{free on} $A$ or \emph{freely generated by} $A$ if the obvious map $\mathrsfs{F}(A)\to M$ is an isomorphism of monoids; in other words, if each element of $M$ can be written in a unique way as a product of elements of $A$.

What do free monoids look like? Certainly the free monoid on one element is isomorphic to $\bbn_0$ with addition. The free monoid on two generators is much richer in structure. It is tempting to think of it as similar to the free group on two generators; but it is in fact much more rigid. Namely:
\begin{Lemma}\label{AutOfFreeMonoids}
Let $X=\{ x_1,\ldots ,x_n\}$ be a finite set with $n$ elements, and set $\mathrsfs{F}_n=\mathrsfs{F}(X)$. Then any automorphism of $\mathrsfs{F}_n$ is obtained from a permutation of the $x_i$.
\end{Lemma}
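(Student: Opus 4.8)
The key observation is that the generators $x_1,\dots,x_n$ are precisely the \emph{indecomposable} elements of $\mathrsfs F_n$, that is, the elements $w\neq\varnothing$ that cannot be written as $w=uv$ with both $u,v\neq\varnothing$. Indeed, in terms of word length $\ell:\mathrsfs F_n\to\bbn_0$ (which is additive: $\ell(uv)=\ell(u)+\ell(v)$), an element is indecomposable if and only if it has length $1$, since a word of length $\ge 2$ splits off its first letter, and a word of length $1$ obviously admits no nontrivial factorization. Hence the set $\{x_1,\dots,x_n\}$ is intrinsically characterized inside $\mathrsfs F_n$, without reference to the chosen alphabet.

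Now let $\varphi:\mathrsfs F_n\to\mathrsfs F_n$ be an automorphism. Since $\varphi$ is a monoid isomorphism, it carries indecomposable elements to indecomposable elements (if $\varphi(x_i)=uv$ with $u,v\neq\varnothing$, then $x_i=\varphi^{-1}(u)\varphi^{-1}(v)$ is a nontrivial factorization, because $\varphi^{-1}(\varnothing)=\varnothing$ and $\varphi^{-1}$ is injective). Therefore $\varphi$ restricts to a map of the set $\{x_1,\dots,x_n\}$ to itself, and the same argument applied to $\varphi^{-1}$ shows this restriction is a bijection, i.e. a permutation $\sigma$ of the index set with $\varphi(x_i)=x_{\sigma(i)}$. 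Since $\mathrsfs F_n$ is generated as a monoid by the $x_i$ and $\varphi$ is a homomorphism, $\varphi$ is completely determined by its values on the generators; thus $\varphi$ is exactly the automorphism induced by $\sigma$, namely $x_{i_1}x_{i_2}\cdots x_{i_k}\mapsto x_{\sigma(i_1)}x_{\sigma(i_2)}\cdots x_{\sigma(i_k)}$. Conversely every permutation of $X$ plainly induces an automorphism, so $\Aut(\mathrsfs F_n)\cong S_n$.

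The only point requiring a moment's care is the claim that $\varphi^{-1}(\varnothing)=\varnothing$ and more generally that the identity is the unique element of length $0$; but this is immediate since $\varnothing$ is the unique invertible element (if $uv=\varnothing$ then $\ell(u)+\ell(v)=0$ so $u=v=\varnothing$), and monoid isomorphisms preserve the identity. I do not anticipate any real obstacle here: the whole content is the length function and the resulting characterization of the generators as the indecomposables.
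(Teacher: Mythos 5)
Your proof is correct and takes essentially the same approach as the paper: both characterize the generating set $X$ intrinsically as the non-identity indecomposable elements of $\mathrsfs{F}_n$, and conclude that any automorphism must permute $X$. Your write-up simply spells out the details (the length function, invertibility of $\varnothing$) that the paper leaves implicit.
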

\begin{proof}
Consider $X$ as a subset of $\mathrsfs{F}_n$. Then an element $\gamma\in\mathrsfs{F}_n$ is in $X$ if and only if $\gamma\neq 1$ and whenever $\gamma =\delta\varepsilon$, then at least one of $\delta$, $\varepsilon$ is equal to $1$. Hence any automorphism of $\mathrsfs{F}_n$ takes $X$ to itself.

In particular, $\mathrsfs{F}_n\simeq\mathrsfs{F}_m$ if and only $m=n$.
\end{proof}
An automorphism of $\mathrsfs{F}_n$ is of course determined by what it does on $X$, and so we get an automorphism $\Aut\mathrsfs{F}_n\simeq\mathfrak{S}_n$, the symmetric group. By contrast, if $F_n$ denotes the free \emph{group} on $n$ letters, the automorphism group $\Aut F_n$ is huge. But the picture becomes clearer when one notices that the analogue of $\Aut F_n$ should not be the group $\Aut\mathrsfs{F}_n$, but the monoid $\End\mathrsfs{F}_n$, which is much larger.

But now enough abstract algebra; we finally introduce the object announced in the section title. As one would expect from the notation, the monoid $\SL_2(\bbn_0)$ consists of all $(2\times 2)$-matrices with entries in $\bbn_0$ having determinant one, with matrix multiplication as the monoid operation. In other words, $\SL_2(\bbn_0)$ is the sub-monoid of $\SL_2(\bbz )$ consisting of all matrices with nonnegative entries. Note that the composition
\begin{equation}
\SL_2(\bbn_0)\to\SL_2(\bbz )\to\PSL_2(\bbz )
\end{equation}
is injective, so that we can and will view $\SL_2(\bbn_0)$ as a submonoid of $\PSL_2(\bbz )$. Hence the M\"{o}bius transformations $L$ and $R$ introduced above can be viewed as elements of $\SL_2(\bbn_0)$.
\begin{Proposition}[Folklore]\label{SLZwoNFrei}
The monoid $\SL_2(\bbn_0)$ is freely generated by the elements
\begin{equation}
L=\begin{pmatrix}1&0\\ 1&1\end{pmatrix}\text{ and }R=\begin{pmatrix}1&1\\ 0&1\end{pmatrix}.
\end{equation}
\end{Proposition}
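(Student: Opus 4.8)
The plan is to prove that the canonical monoid homomorphism $\Phi\colon\mathrsfs{F}(\{ L,R\})\to\SL_2(\bbn_0)$, sending a word to the product of the matrices it spells, is bijective; this is exactly the assertion that $\SL_2(\bbn_0)$ is free on $\{ L,R\}$. Both surjectivity (``$L$ and $R$ generate'') and injectivity (``uniqueness of the expression'') will be extracted from the operation of stripping off the leftmost letter of a word. Since $L^{-1}=\begin{pmatrix}1&0\\ -1&1\end{pmatrix}$ and $R^{-1}=\begin{pmatrix}1&-1\\ 0&1\end{pmatrix}$, a direct computation gives, for $M=\begin{pmatrix}a&b\\ c&d\end{pmatrix}$,
\[
L^{-1}M=\begin{pmatrix}a&b\\ c-a&d-b\end{pmatrix},\qquad R^{-1}M=\begin{pmatrix}a-c&b-d\\ c&d\end{pmatrix};
\]
both have determinant one automatically, so the only issue is whether their entries remain in $\bbn_0$.

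The technical core is the following dichotomy: \emph{if $M\in\SL_2(\bbn_0)$ with $M\neq\bbone$, then exactly one of $L^{-1}M$ and $R^{-1}M$ again lies in $\SL_2(\bbn_0)$, and that matrix has strictly smaller entry sum than $M$.} Writing the rows of $M$ as $r_1=(a,b)$ and $r_2=(c,d)$, one reads off from the formulas above that $R^{-1}M\in\SL_2(\bbn_0)$ exactly when $r_1\ge r_2$ entrywise, and $L^{-1}M\in\SL_2(\bbn_0)$ exactly when $r_2\ge r_1$ entrywise. Now $r_1=r_2$ is impossible, since it would give $\det M=0$; and $r_1,r_2$ cannot be incomparable either. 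Indeed incomparability means either $a>c$ with $b<d$, or $a<c$ with $b>d$: in the first case $bc\le(a-1)(d-1)=ad-a-d+1$ yields $1=ad-bc\ge a+d-1$, and since $a,d\ge 1$ this forces $a=d=1$ and then $b=c=0$, i.e.\ $M=\bbone$, excluded; in the second case $ad\le(c-1)(b-1)$ yields $1=ad-bc\le 1-b-c$, so $b+c\le 0$, contradicting $c\ge 1$. Hence $r_1$ and $r_2$ are comparable, and (being unequal) in precisely one direction, which is the dichotomy. The strict decrease is clear: passing to $R^{-1}M$ lowers the entry sum by $c+d$, which is positive because $c=d=0$ would give $\det M=0$, and symmetrically $L^{-1}M$ lowers it by $a+b>0$.

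Granting the dichotomy, surjectivity follows by induction on the entry sum: $\bbone$ is the value of the empty word, and for $M\neq\bbone$ we may write $M=L\cdot(L^{-1}M)$ or $M=R\cdot(R^{-1}M)$ with a second factor of strictly smaller entry sum, which is a word by induction. For injectivity, note first that a nonempty word has entry sum at least $3$: the letters $L,R$ have entry sum $3$, and right-multiplying any element of $\SL_2(\bbn_0)$ by $L$ or by $R$ strictly increases the entry sum (the increment is the sum of one of its two columns, positive since a matrix of determinant one has no zero column). As $\bbone$ has entry sum $2$, only the empty word evaluates to $\bbone$. Now let $w,w'$ be words with $\Phi(w)=\Phi(w')=M$ and induct on the length of $w$. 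If $M=\bbone$, both are empty. Otherwise both are nonempty; if $w$ begins with $L$, then $L^{-1}M$, being the value of the remainder of $w$, lies in $\SL_2(\bbn_0)$, so by the dichotomy $R^{-1}M\notin\SL_2(\bbn_0)$, which forces $w'$ also to begin with $L$ --- and symmetrically for $R$. Thus $w$ and $w'$ share their first letter $s$, their tails are shorter words both evaluating to $s^{-1}M$, so by induction the tails agree, whence $w=w'$.

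The single genuinely nontrivial point is the inequality inside the dichotomy --- that incomparable rows force $M=\bbone$ (or an outright contradiction) --- everything else being $2\times 2$ bookkeeping; the one easily-missed detail is that nonempty words never evaluate to $\bbone$, which is precisely what anchors the injectivity induction. As an alternative route to injectivity one can instead invoke Proposition~\ref{CWProposition}: a word $s_1\cdots s_k$ in $L,R$ carries $1\in\bbp^1(\bbq)$ to the label of the Calkin--Wilf node reached by performing the moves $s_k,s_{k-1},\dots,s_1$ from the root, and since distinct words give distinct nodes and distinct nodes carry distinct labels, distinct words act differently on $1$ and are therefore distinct as matrices.
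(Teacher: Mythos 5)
Your proof is correct and follows essentially the same strategy as the paper: surjectivity by induction on the entry sum using the comparability-of-rows inequality (your dichotomy is a restatement of the paper's Lemma~\ref{LemmaInequality} plus the disjointness of $\{\bbone\}$, $L\cdot\SL_2(\bbn_0)$ and $R\cdot\SL_2(\bbn_0)$), and uniqueness by the same left-cancellation induction on word length. The only cosmetic difference is that you package the two halves of the argument around a single ``exactly one of $L^{-1}M$, $R^{-1}M$ survives'' lemma, whereas the paper states the ``at least one'' and ``at most one'' halves separately; your side remark about deducing injectivity from Proposition~\ref{CWProposition} is also valid, since that proposition was proved independently.
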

\begin{proof}
We first show that $\SL_2(\bbn_0)$ is generated by $L$ and $R$. So let
$$\gamma =\begin{pmatrix}
a&b\\
c&d
\end{pmatrix}
\in\SL_2(\bbn_0).$$ We set $\Sigma (\gamma )=a+b+c+d$ and proceed by induction on $\Sigma (\gamma )$. It is clear that $\Sigma (\gamma )\ge 2$, with equality if and only if $\gamma =\bbone$. Hence we may assume that $\Sigma (\gamma )\ge 3$ and $\gamma\neq\bbone$. Consider the two products in $\SL_2(\bbz )$:
$$L^{-1}\gamma =\begin{pmatrix}
a&b\\
c-a&d-b
\end{pmatrix}\text{ and }R^{-1}\gamma=
\begin{pmatrix}
a-c&b-d\\
c&d
\end{pmatrix}.$$
By Lemma \ref{LemmaInequality} below, $(a-c)(b-d)\ge  0$, so at least one of these is in $\SL_2(\bbn_0)$. For sake of simplicity, assume that $L^{-1}\gamma\in\SL_2(\bbn_0)$, the other cases is treated analogously. Then $\Sigma (L^{-1}\gamma )<\Sigma (\gamma )$, so by induction hypothesis $L^{-1}\gamma$ is a product of $L$ and $R$. Hence so is $\gamma$.

Now we have proved that $L$ and $R$ generate $\SL_2(\bbn_0)$. As to freedom, we show that $\SL_2(\bbn_0)$ is the disjoint union of the sets $\{\bbone\}$, $L\cdot\SL_2(\bbn_0)$ and $R\cdot\SL_2(\bbn_0)$. That it is their union follows from the fact already proved (that $L$ and $R$ generate $\SL_2(\bbn_0)$), and the disjointness follows by contemplating the equations
$$
L\cdot\begin{pmatrix}
a&b\\
c&d
\end{pmatrix}=\begin{pmatrix}
a&b\\
a+c&b+d
\end{pmatrix}\text{ and }R\cdot\begin{pmatrix}
a&b\\
c&d
\end{pmatrix}=\begin{pmatrix}
a+c&b+d\\
c&d
\end{pmatrix}.
$$
(Just consider the possible order relations between entries.) But this observation gives an induction proof on word length for the uniqueness of a word defining an element.
\end{proof}
We should remark that $L$ and $R$ do \emph{not} generate a free group of matrices, nor of M\"{o}bius transformations. To be more specific, the subgroup of $\GL_2(\bbq )$ they generate is $\SL_2(\bbz )$, and correspondingly the subgroup of $\PGL_2(\bbq )$ they generate is $\PSL_2(\bbz )$. Both groups are well-known to contain nontrivial torsion elements. For instance, we have the equations $(RL^{-1}R)^2=\bbone$ in $\PSL_2(\bbz )$ and $(RL^{-1}R)^4=\bbone$ in $\SL_2(\bbz )$.
\begin{Lemma}\label{LemmaInequality}
Let 
$$\begin{pmatrix}
a&b\\
c&d\end{pmatrix}\in\SL_2(\bbn_0)$$
be different from the identity matrix. Then $(a-c)(b-d)\ge 0$.
\end{Lemma}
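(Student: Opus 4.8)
The plan is to argue by contradiction. Suppose that $(a-c)(b-d)<0$. Then the two integers $a-c$ and $b-d$ are nonzero and have opposite signs, so exactly one of the following two situations occurs: either $a>c$ and $b<d$, or $a<c$ and $b>d$. I will dispose of each of these in turn, using nothing beyond the determinant relation $ad-bc=1$ and the nonnegativity of $a,b,c,d$ (and, in one case, the hypothesis that the matrix is not the identity).

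Consider first the case $a<c$ and $b>d$. Since the entries are integers, $c\ge a+1$ and $b\ge d+1$, and as all quantities involved are nonnegative we may multiply these inequalities to obtain $bc\ge (a+1)(d+1)=ad+a+d+1>ad$. Hence $ad-bc<0$, contradicting $ad-bc=1$. This case therefore cannot occur, and notably the hypothesis $\gamma\neq\bbone$ was not needed here.

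Now consider the remaining case $a>c$ and $b<d$. Again $a\ge c+1$ and $d\ge b+1$, so $ad\ge (c+1)(b+1)=bc+b+c+1$, which gives $1=ad-bc\ge b+c+1$, hence $b+c\le 0$ and thus $b=c=0$. But then $ad=ad-bc=1$ with $a,d\in\bbn_0$ forces $a=d=1$, i.e.\ the matrix equals $\bbone$, contrary to the hypothesis. So this case is impossible as well, and the contradiction completes the proof. There is no real obstacle here; the only point requiring attention is to notice that one of the two sign configurations is excluded outright by the determinant, while the other is excluded only after invoking $\gamma\neq\bbone$, so one must keep track of exactly where that hypothesis is used.
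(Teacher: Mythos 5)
Your proof is correct and follows essentially the same route as the paper's: split into the same two sign cases, use integrality to get the $\pm 1$ gaps, and play the determinant against the resulting product inequality. The only cosmetic difference is that you treat the cases in the opposite order and phrase the second contradiction as $ad-bc<0$ rather than $-1\ge 1$; the substance is identical, including noting that the hypothesis $\gamma\neq\bbone$ is used only in one of the two cases.
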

\begin{proof}
Assume that $(a-c)(b-d)<0$, i.e. that $a-c$ and $b-d$ are both nonzero and have opposite signs. There are two cases.

The first case is that $a>c$ and $d>b$. Then $a\ge c+1$ and $d\ge b+1$, whence
$$1=ad-bc\ge (c+1)(b+1)-bc =b+c+1\ge 1,$$
so equality has to hold everywhere, and $b=c=0$. From $ad-bc=1$ we get that $a=d=1$, hence the matrix in question is the identity matrix.

The second case is that $c>a$ and $b>d$. Then $c\ge a+1$ and $b\ge d+1$, so that
$$-1=bc-ad \ge (a+1)(d+1)-ad =a+d+1\ge 1,$$
contradiction.
\end{proof}
We can now reinterpret the Calkin-Wilf tree in a new light: it is the directed Cayley graph of $\SL_2(\bbn_0)$. Let us make this precise.
\begin{Definition}
A \emph{directed graph} is a quadruple $(V,E,s,t)$, where $V$ and $E$ are sets (of ``vertices'' and ``edges'', respectively) and $s$ and $t$ are maps $E\to V$ (designating ``source'' and ``target'').
\end{Definition}
When we draw (or imagine) a directed graph, we draw a node for each $v\in V$, and for each $e\in E$ an arrow originating in $s(e)$ and ending in $t(e)$. Forgetting the orientations of the arrows gives a graph in the usual sense, and we say that a directed graph is a (directed) tree if this underlying undirected graph is a tree.
\begin{Definition}
Let $M$ be a monoid and $A\subseteq M$ a generating set. The \emph{directed Cayley graph} $C(M,A)$ is the directed graph $(V,E,s,t)$ with $V=M$ and $E=M\times A$, such that $s(\mu ,\alpha )=\mu$ and $t(\mu ,\alpha )=\alpha\mu$.
\end{Definition}
In less formal terms, the vertices are in bijection with $M$, and for each $\mu\in M$ and each $\alpha\in A$ we draw an arrow from $\mu$ to $\alpha\mu$.

Note that if $A$ freely generates $M$, then $C(M,A)$ is a directed tree where every arrow points away from the ``root'' $e\in M$.

When treating Cayley graphs of groups, there is often a nasty ambiguity involved in choosing a set of generators. As a consequence, one is mainly interested in properties of the Cayley graph that do not depend on the choice of a particular set of generators. Here, however, we are in a much nicer situation. Proposition \ref{SLZwoNFrei} gives us an explicit isomorphism between $\mathrsfs{F}_2$ and $\SL_2(\bbn_0)$, and from Lemma \ref{AutOfFreeMonoids} we learn that $\{ L,R\}$ is \emph{the only} subset that freely generates $\SL_2(\bbn_0)$. In other words, if we want a tree, we have no other choice for our generators.
\begin{Proposition}\label{BijektionSLZwoQ}
Consider $\SL_2(\bbn_0)$ as a submonoid of the group $\PSL_2(\bbz )$, acting on $\bbp^1(\bbq )$ by M\"{o}bius transformations. The orbit map $\gamma\mapsto \gamma (1)$ defines a bijection $\Omega :\SL_2(\bbn_0)\to\bbq_{>0}$.

Furthermore, $\Omega$ defines an isomorphism of directed graphs between the directed Cayley tree $C(\SL_2(\bbn_0),\{ L,R\} )$ and the Calkin-Wilf tree. Here we identify the vertex set of the Calkin-Wilf tree with $\bbq_{>0}$, and we orient each of its edges as pointing away from $1$.\hfill $\square$
\end{Proposition}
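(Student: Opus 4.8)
\noindent The plan is to deduce both assertions from Propositions~\ref{SLZwoNFrei} and~\ref{CWProposition} together with two elementary identities satisfied by the orbit map. First I would record that for $\gamma=\begin{pmatrix}a&b\\ c&d\end{pmatrix}\in\SL_2(\bbn_0)$ one has $\Omega(\gamma)=\gamma(1)=\frac{a+b}{c+d}$, and that $a+b\ge 1$ and $c+d\ge 1$ (they cannot both vanish, as otherwise $\det\gamma=0$), so that $\Omega$ indeed takes values in $\bbq_{>0}$; as an aside, $\gcd(a+b,c+d)$ divides $(a+b)d-(c+d)b=ad-bc=1$, so the displayed fraction is already in lowest terms. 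The two identities I need are
\begin{equation*}
\Omega(\bbone)=1,\qquad\Omega(L\gamma)=L\bigl(\Omega(\gamma)\bigr),\qquad\Omega(R\gamma)=R\bigl(\Omega(\gamma)\bigr),
\end{equation*}
all immediate from the fact that $\SL_2(\bbn_0)$ acts on $\bbp^1(\bbq)$ by M\"obius transformations, i.e.\ $(\alpha\gamma)(z)=\alpha(\gamma(z))$.

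Next I would index the vertices of both trees by the free monoid $\mathrsfs{F}_2$, regarded as the set of finite sequences of moves in $\{L,R\}$ read from the root outward. For such a word $w$, let $\gamma_w\in\SL_2(\bbn_0)$ be the vertex of the Cayley tree $C(\SL_2(\bbn_0),\{L,R\})$ reached from $\bbone$ along $w$, and let $x_w\in\bbq_{>0}$ be the label of the node of the Calkin--Wilf tree reached from $1$ along $w$. Since, by Proposition~\ref{SLZwoNFrei}, $C(\SL_2(\bbn_0),\{L,R\})$ is a directed tree in which every vertex is joined to the root $\bbone$ by a unique directed path, $w\mapsto\gamma_w$ is a bijection $\mathrsfs{F}_2\to\SL_2(\bbn_0)$; and Proposition~\ref{CWProposition}, read as the statement that the labelling is a bijection from the node set of the Calkin--Wilf tree onto $\bbq_{>0}$, says that $w\mapsto x_w$ is a bijection $\mathrsfs{F}_2\to\bbq_{>0}$. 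A short induction on the length of $w$, using the recursions $\gamma_{w\alpha}=\alpha\gamma_w$ (the definition of the Cayley graph) and $x_{w\alpha}=\alpha(x_w)$ (the child rule of the Calkin--Wilf tree) and the identities above, gives $x_w=\Omega(\gamma_w)$ for every $w$. Hence $\Omega=(w\mapsto x_w)\circ(w\mapsto\gamma_w)^{-1}$ is a bijection, which is the first assertion.

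For the second assertion I would observe that the identities $\Omega(L\gamma)=L(\Omega(\gamma))$ and $\Omega(R\gamma)=R(\Omega(\gamma))$ say precisely that $\Omega$ sends the edge of the Cayley tree from $\gamma$ to $L\gamma$ (respectively $R\gamma$) to the edge of the Calkin--Wilf tree from $\Omega(\gamma)$ to its left (respectively right) child, preserving source, target, and the left/right labelling. Since $\Omega$ is by now known to be bijective on vertices, and since the two edges of the Calkin--Wilf tree leaving a node $x$ are exactly $x\to L(x)$ and $x\to R(x)$ (all oriented away from $1$), the induced map on edge sets is a bijection compatible with the source and target maps. This exhibits $\Omega$ as an isomorphism of directed graphs.

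The one point that requires genuine care — everything else being bookkeeping — is to keep the composition orders consistent: the monoid acts on the \emph{left}, the Cayley-graph convention is $t(\mu,\alpha)=\alpha\mu$ (multiplication on the left), and the child rule attaches $\alpha(x)$ as a child of $x$ (application from the outside). These three conventions are mutually compatible, so the induction above goes through with no reversal of words; had any one of them been normalised on the opposite side, the correct statement would have involved composing with the word-reversal anti-automorphism of $\mathrsfs{F}_2$.
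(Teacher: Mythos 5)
Your proof is correct, and it spells out precisely the argument the paper regards as immediate (the paper marks this proposition with a terminal $\square$ and gives no proof, treating it as a direct consequence of Propositions~\ref{CWProposition} and~\ref{SLZwoNFrei} together with the Cayley-graph definition). Your closing remark about the left-multiplication convention $t(\mu,\alpha)=\alpha\mu$ in the Cayley graph matching the outside-in composition of M\"obius transformations is exactly the point that makes the induction $x_w=\Omega(\gamma_w)$ go through cleanly.
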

This has an amusing simple consequence in terms of Diophantine equations:
\begin{Corollary}
Let $p,q$ be coprime positive integers. Then there exist unique $a,b,c,d\in\bbn_0$ with $a+b=p$, $c+d=q$ and $ad-bc=1$.
\end{Corollary}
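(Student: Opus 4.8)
The plan is to read off the corollary directly from the bijection $\Omega\colon\SL_2(\bbn_0)\to\bbq_{>0}$ of Proposition~\ref{BijektionSLZwoQ}, after one short preliminary observation.

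First I would check that for every $\gamma=\begin{pmatrix}a&b\\c&d\end{pmatrix}\in\SL_2(\bbn_0)$ the integers $a+b$ and $c+d$ are positive and coprime, so that $\Omega(\gamma)=\gamma(1)=\frac{a+b}{c+d}$ is \emph{already} a reduced fraction. Positivity is clear: if $a+b=0$ then $a=b=0$ and $ad-bc=0\ne 1$, and likewise $c+d>0$. Coprimality follows from the identity
\[
d\,(a+b)-b\,(c+d)=ad-bc=1 ,
\]
so any common divisor of $a+b$ and $c+d$ divides $1$. Consequently, writing $\Omega(\gamma)$ as a ratio of coprime positive integers recovers exactly the pair $(a+b,\,c+d)$.

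With this in hand I would argue existence and uniqueness in parallel. Given coprime positive integers $p,q$, the rational $p/q$ lies in $\bbq_{>0}$, so by Proposition~\ref{BijektionSLZwoQ} there is a unique $\gamma=\begin{pmatrix}a&b\\c&d\end{pmatrix}\in\SL_2(\bbn_0)$ with $\Omega(\gamma)=p/q$; by the previous paragraph and the uniqueness of the reduced representative of a positive rational we get $a+b=p$ and $c+d=q$, while $ad-bc=1$ holds by the definition of $\SL_2(\bbn_0)$. Conversely, any quadruple $(a,b,c,d)\in\bbn_0^4$ with $a+b=p$, $c+d=q$, $ad-bc=1$ yields a matrix $\begin{pmatrix}a&b\\c&d\end{pmatrix}\in\SL_2(\bbn_0)$ that $\Omega$ sends to $\frac{p}{q}$, so injectivity of $\Omega$ pins the matrix---hence the quadruple---down uniquely.

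I do not anticipate a genuine obstacle: the corollary is essentially a restatement of Proposition~\ref{BijektionSLZwoQ}. The only point requiring a moment's thought is the coprimality of $a+b$ and $c+d$, which is exactly what guarantees that these sums are the actual numerator and denominator of $\Omega(\gamma)$ and not merely an unreduced form of it; but the displayed identity settles this at once.
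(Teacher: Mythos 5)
Your proof is correct and follows the same route as the paper, which simply declares that the system of equations translates into $\gamma(1)=x$ for $\gamma\in\SL_2(\bbn_0)$ and then invokes Proposition~\ref{BijektionSLZwoQ}. You have merely spelled out the implicit detail that $a+b$ and $c+d$ are automatically positive and coprime (via $d(a+b)-b(c+d)=ad-bc=1$), which is exactly what makes the translation legitimate.
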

\begin{proof}
Set $x=p/q$. The system of equations given above can be translated into $\gamma (1)=x$ for $\gamma\in\SL_2(\bbn_0)$.
\end{proof}

\section{Injective Families}

\noindent We are looking for generalisations of the Calkin-Wilf tree; we first generalise the original construction in four different respects and then ask ourselves if we get any new examples with comparably nice properties.
\begin{enumerate}
\item Replace $2$ by any positive integer $n$: consider directed trees in which every node has $n$ (ordered) children.
\item Replace $\bbq$ by any number field.
\item Replace the initial value $1\in\bbp^1(\bbq )$ by any $x_0\in\bbp^1(K)$.
\item Replace the two M\"{o}bius transformations $L$ and $R$ by $n$ rational maps $f_1,\ldots ,f_r\in K(t)$.
\end{enumerate}
These data (i) --- (iv) should fit together in the following way: if we label the tree in (i) in such a way that the root is labelled $x_0$, and that if a node is labelled by $x\in\bbp^1(K)$, then its $n$ children are labelled $f_1(x),\ldots ,f_n(x)$, in this order. Then every element $x\in\bbp^1(K)$ should appear at most once in the tree, and the set of those that do occur should be some ``simple'' subset of $\bbp^1(K)$ (in the Calkin-Wilf tree, it would be $\bbq_{>0}$ which is arguably quite simple). Of course, what we mean by ``simple'' has to become clear in the course of the discussion.

Let us first consider the tree. The description can be made more conceptual by saying that it should be the Cayley tree $C(\mathrsfs{F}(X),X)$, where $X=\{ x_1,\ldots ,x_n\}$ with the $x_i$ pairwise distinct. As above, we set $\mathrsfs{F}_n=\mathrsfs{F}(X)$, and in addition $C(\mathrsfs{F}_n)$ short for $C(\mathrsfs{F}(X),X)$.

Our rational maps should, of course, be nonconstant; hence they should live in the monoid $\mathrsfs{R}(K)$ which consists of all nonconstant rational maps $f\in K(t)$, with composition $f\circ g$ as multiplication. This may be viewed as a sub-monoid of the monoid of endomorphisms $\End\bbp_K^1=\Hom_K(\bbp^1_K,\bbp^1_K)$. Here $\bbp^1_K$ is considered as a $K$-variety. The invertible elements in this monoid are precisely the M\"{o}bius transformations, so that we get a canonical identification $\mathrsfs{R}(K)^{\times }=\PGL_2(K)$. Note that, since $K$ is infinite, we need not distinguish between a rational function as a formal expression and the map $\bbp^1(K)\to\bbp^1(K)$ it induces.
\begin{Proposition}
For every number field $K$, the monoid $\mathrsfs{R}(K)$ is infinitely generated.
\end{Proposition}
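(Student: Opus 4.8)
The plan is to show that $\mathrsfs{R}(K)$ cannot be generated by any finite set, by using the degree homomorphism. Recall that for nonconstant rational maps $f,g\in K(t)$ one has $\deg(f\circ g)=\deg(f)\cdot\deg(g)$, so $\deg\colon\mathrsfs{R}(K)\to\bbn\smallsetminus\{0\}$ is a monoid homomorphism onto the multiplicative monoid of positive integers. First I would observe that if $A\subseteq\mathrsfs{R}(K)$ is a finite generating set, then every element of $\mathrsfs{R}(K)$ has degree that is a product of elements of the finite set $\{\deg(a):a\in A\}$; in particular only finitely many primes can divide the degree of any element of $\mathrsfs{R}(K)$. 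So it suffices to exhibit, for each prime $p$, a nonconstant rational map of degree $p$ — or even just a single nonconstant rational map whose degree is divisible by infinitely many primes, but the former is cleaner.

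Next I would write down such maps explicitly: the polynomial $t\mapsto t^p$ lies in $K[t]\subset K(t)$ and has degree $p$, for every prime $p$. Since there are infinitely many primes, the image of $\deg$ is all of $\bbn\smallsetminus\{0\}$ and, more to the point, is not contained in the multiplicative submonoid generated by any finite subset of $\bbn\smallsetminus\{0\}$ (such a submonoid omits all but finitely many primes). This contradicts the conclusion of the previous paragraph, so no finite $A$ can generate $\mathrsfs{R}(K)$.

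I do not expect any serious obstacle here; the only point requiring a word of care is the multiplicativity of the degree under composition, which should be recalled or cited — it is standard that for $f,g\in K(t)$ nonconstant, $[K(t):K(f\circ g)(t)]\!=\![K(t):K(g)(t)]\cdot[K(g)(t):K(f\circ g)(t)]$, i.e. $\deg(f\circ g)=\deg f\cdot\deg g$, so that $\deg$ is indeed a monoid homomorphism. One should also note that the argument uses nothing about $K$ beyond its being infinite (so that rational functions are the same as the maps they induce, as remarked in the text); the statement for number fields is just a special case. Finally, it is worth remarking that the same degree argument shows the stronger fact that $\mathrsfs{R}(K)$ is not even finitely generated as a monoid-with-a-given-submonoid, and indeed that the submonoid of polynomial maps is already infinitely generated.
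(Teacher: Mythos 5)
Your proof is correct, and it takes a genuinely different and more elementary route than the paper. The paper first proves that the \emph{unit group} $\PGL_2(K)=\mathrsfs{R}(K)^\times$ is infinitely generated, by observing that it surjects onto $K^\times/(K^\times)^2$, which is an infinite $\mathbb{F}_2$-vector space (the infinitude of $K^\times/(K^\times)^2$ is deduced from Dirichlet's density theorem). It then uses the multiplicativity of degree to argue that in any finite generating set of $\mathrsfs{R}(K)$, the degree-one generators alone would already have to generate $\PGL_2(K)$, a contradiction. You skip the detour through the unit group entirely: by multiplicativity of degree, a finitely generated submonoid of $\mathrsfs{R}(K)$ has degrees lying in a multiplicative submonoid of $\mathbb{N}$ generated by finitely many integers, hence involving only finitely many primes, whereas the power maps $t\mapsto t^p$ supply elements of degree $p$ for every prime $p$. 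This is shorter, avoids algebraic number theory altogether, and in fact applies to \emph{any} field $K$ (with the usual caveat that over finite fields one should treat $\mathrsfs{R}(K)$ as formal rational functions rather than as maps, a point you correctly flag). What the paper's proof buys instead is the separate and, in the context of the paper, relevant fact that $\PGL_2(K)$ itself is not finitely generated as a group; your argument does not recover that statement, but it is not needed for the proposition. Both proofs share the single key ingredient that $\deg(f\circ g)=\deg f\cdot\deg g$, which you justify by the standard tower-of-fields argument.
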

\begin{proof}
First we show that certain groups are not finitely generated. To begin with, an abelian $2$-torsion group is the same as an $\mathbb{F}_2$-vector space; hence such an abelian group is finitely generated if and only if it is finite. For any number field, the group $K^{\times }/(K^{\times })^2$ is infinite\footnote{This can be seen, for instance, as follows: By Dirichlet's density theorem, see \cite[Chapter VII, Theorem 13.2]{Neukirch99}, there are infinitely many prime ideals in the ring of integers $\mathfrak{o}_K$ which are principal ideals. Let these be $\mathfrak{p}_1,\mathfrak{p}_2$ etc., and let $p_k$ be a generator of $\mathfrak{p}_k$. Then the elements $p_1,p_2$ etc. are all distinct modulo $(K^{\times})^2$.}. Hence it is infinitely generated, and therefore also the group $\PGL_2(K)$, which surjects onto it, must be infinitely generated.

But from this it follows that $\mathrsfs{R}(K)$ cannot be finitely generated. Suppose it were, say generated by $f_1,\ldots ,f_r,g_1,\ldots ,g_s$ with $\deg f_i=1$ and $\deg g_i>1$. Since $\deg (\varphi\circ\psi )=\deg\varphi\cdot\deg\psi$, we see that any composition containing at least one $g_i$ must have degree $>1$. So the monoid (and hence also the group) $\PGL_2(K)$ must be generated by $f_1,\ldots ,f_r$, which we have just seen to be impossible.
\end{proof}
It is all the more astonishing that we can express all $f\in\mathrsfs{R}(K)$ as compositions of just two admittedly strange maps $\bbp^1(K)\to\bbp^1(K)$.
\begin{Theorem}[Sierpi\'{n}ski]\label{SierpinskiTheorem}
Let $A$ be an infinite set, and let $\mathrsfs{M}(A)$ be the monoid of \emph{all} maps $A\to A$, with composition of maps as monoid composition. Let $X\subset\mathrsfs{M}(A)$ be any countable subset. Then there exist elements $\varphi ,\psi\in\mathrsfs{M}(A)$ such that $X$ is contained in the submonoid of $\mathrsfs{M}(A)$ generated by $\varphi$ and $\psi$.\hfill $\square$
\end{Theorem}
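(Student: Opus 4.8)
The plan is to let the two sought-after maps run a little ``machine'' on $A$---store an input, count, then apply the correct member of $X$---and afterwards to compress the few auxiliary maps that this requires down to exactly two generators; the compression is where the real difficulty lies.

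\emph{Set-up and the machine.} First I would harmlessly enlarge $X$ to a countably infinite list $X=\{h_0,h_1,h_2,\dots\}$ with $h_0=\mathrm{id}_A$. Since $A$ is infinite we have $|A\times\bbn_0|=|A|$, so (invoking the axiom of choice) fix a bijection $A\to A\times\bbn_0$ and use it to write every element of $A$ as a pair $\langle a,k\rangle$, thinking of $a\in A$ as a datum and $k\in\bbn_0$ as a step counter. Define three self-maps of $A$: an \emph{advance} map $\varphi$ with $\varphi\langle a,k\rangle=\langle a,k+1\rangle$; an \emph{evaluate} map $\psi$ with $\psi\langle a,k\rangle=h_k(a)$; and a \emph{store} map $\varepsilon$ with $\varepsilon(x)=\langle x,0\rangle$. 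All three lie in $\mathrsfs{M}(A)$, the map $\varepsilon$ is injective, $\psi\circ\varepsilon=h_0=\mathrm{id}_A$, and for all $n\ge0$ and all $x\in A$
\[
\psi\bigl(\varphi^{\,n}(\varepsilon(x))\bigr)=\psi\langle x,n\rangle=h_n(x),
\]
so $h_n=\psi\circ\varphi^{\,n}\circ\varepsilon$. Hence $X$ lies in the submonoid of $\mathrsfs{M}(A)$ generated by the three maps $\varepsilon,\varphi,\psi$; of these, $\varphi$ and $\varepsilon$ are universal (they depend only on the fixed bijection) and only $\psi$ encodes the family $X$.

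\emph{From three generators to two---the obstacle.} It remains to absorb the storage map $\varepsilon$, and this is the genuinely delicate step. It cannot be done naively: $\varepsilon$ exists precisely to reset the counter of an arbitrary element, and one checks quickly that no word in $\varphi$ and $\psi$ can do that, because an arbitrary element of $A$ already looks like some intermediate state $\langle a,k\rangle$, which $\varphi$ and $\psi$ would merely increment or read off, never reset. So one needs a cleverer, genuinely simultaneous coding in which the resetting is built into the two generators themselves---this is exactly the content of Sierpi\'{n}ski's theorem. I would prove it by adapting Sierpi\'{n}ski's construction: arrange the two maps so that, for each $n$, the word realising $h_n$ opens with a fixed-length block that unconditionally ferries \emph{any} element of $A$, injectively, into a reserved ``home'' copy of $A$ inside $A$, after which the advance-then-evaluate mechanism above runs unchanged; designing the two maps so that this ferrying works for all inputs at once while keeping to two generators is where all the care goes, and is the step I would expect to take real effort.
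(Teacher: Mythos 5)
The paper does not actually prove this theorem: the statement ends with a~$\square$ and the surrounding text simply cites Sierpi\'{n}ski \cite{Sierpinski35} for the original proof and Banach \cite{Banach35} for a short one. So there is no in-paper argument to compare against; your attempt has to stand on its own. The part you do carry out is correct. With $h_0=\mathrm{id}_A$ one indeed has $\psi\circ\varepsilon=\mathrm{id}_A$ and $h_n=\psi\circ\varphi^{\,n}\circ\varepsilon$, so $X$ lies in the submonoid generated by $\varepsilon,\varphi,\psi$; and your observation that the specific $\varphi,\psi$ you built cannot by themselves generate the $h_n$ is also sound, since the value of any word in $\varphi,\psi$ on $\langle a,k\rangle$ is tied to the input counter $k$ through the shift pattern of the word, whereas a generic $h_n$ obeys no such constraint.

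The genuine gap is that the passage from three generators to two is not an afterthought that can be deferred with ``I would prove it by adapting Sierpi\'{n}ski's construction''; it \emph{is} the theorem. The three-generator statement you prove is near-trivial bookkeeping, and the entire force of Sierpi\'{n}ski's and Banach's results is in exhibiting the two maps. You correctly diagnose the obstruction (there is no controlled reset available in your encoding), but you stop exactly there: no candidate pair of maps is written down, no word shape $w_n(\varphi,\psi)$ that is to equal $h_n$ is specified, and no closure property is verified. As it stands the proposal establishes only a strictly weaker statement and gestures at, rather than supplies, the key construction, so the proof is not complete.
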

This Theorem was first proved in \cite{Sierpinski35}; shortly afterwards, Banach gave a very elegant proof, see \cite{Banach35}.
\begin{Corollary}
For any countable field $K$, there exist two maps $\varphi ,\psi$ from $\bbp^1(K)=K\cup\{\infty \}$ to itself such that \emph{every nonconstant rational map} $\bbp^1(K)\to\bbp^1(K)$ can be written as a finite composition involving only $\varphi$ and $\psi$.
\end{Corollary}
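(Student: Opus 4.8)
The plan is to deduce this at once from Theorem \ref{SierpinskiTheorem}, applied to the infinite set $A = \bbp^1(K) = K\cup\{\infty\}$. The first thing I would check is that $A$ really is countably infinite: it is infinite because $K$ is (the relevant case; were $K$ finite, the assertion would instead be a statement about the full transformation monoid on a finite set, a different matter), and it is countable because $K$ is. So Sierpi\'{n}ski's theorem is applicable with this choice of $A$.

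Next comes the only substantive point, namely choosing the right countable subset $X\subseteq\mathrsfs{M}(A)$. I would take $X=\mathrsfs{R}(K)$, the monoid of all nonconstant rational maps, regarded as a subset of $\mathrsfs{M}(A)$ exactly as explained earlier in this section --- this is legitimate precisely because $K$ is infinite, so that distinct rational functions induce distinct maps $\bbp^1(K)\to\bbp^1(K)$, and because composition of rational maps agrees with composition of the underlying set maps. That this $X$ is countable follows because a rational function over $K$ is determined by a pair of polynomials in $K[t]$, and $K[t]$, being a countable union of finite powers of the countable set $K$, is countable; hence so is $K(t)$ and thus $\mathrsfs{R}(K)$.

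Finally I would invoke Theorem \ref{SierpinskiTheorem} for this $X$: it produces $\varphi,\psi\in\mathrsfs{M}(A)$ such that the submonoid of $\mathrsfs{M}(A)$ generated by $\varphi$ and $\psi$ contains $X=\mathrsfs{R}(K)$. Since membership in that submonoid means exactly being a finite composition of copies of $\varphi$ and $\psi$, every nonconstant rational map $\bbp^1(K)\to\bbp^1(K)$ is such a composition, which is the claim. (The identity map $t$ is itself a nonconstant rational map and is covered too, by the empty composition.)

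I expect no real obstacle here beyond the appeal to Sierpi\'{n}ski's theorem, which is cited. The two points that deserve care are: first, that we are permitted to let $\varphi$ and $\psi$ be arbitrary set-theoretic self-maps of $\bbp^1(K)$ and not rational maps --- which is exactly what makes the corollary striking, and which is why the generating pair from the theorem need not, and in general will not, be a pair of rational functions; and second, the identification of $\mathrsfs{R}(K)$ with a genuine countable subset of $\mathrsfs{M}(\bbp^1(K))$, which is where the hypothesis that $K$ is (countably) infinite enters.
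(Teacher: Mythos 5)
Your proposal is correct and follows exactly the route the paper takes: apply Sierpi\'{n}ski's theorem with $A=\bbp^1(K)$ and $X=\mathrsfs{R}(K)$. The extra care you take --- verifying that $\bbp^1(K)$ is countably infinite, that $\mathrsfs{R}(K)$ is a countable subset of $\mathrsfs{M}(\bbp^1(K))$, and that $\varphi,\psi$ need not themselves be rational --- is all implicit in the paper's one-line proof and is handled correctly.
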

\begin{proof}
Apply Theorem \ref{SierpinskiTheorem} to $A=\bbp^1(K)$ and $X=\mathrsfs{R}(K)$.
\end{proof}

So having chosen rational maps $f_1,\ldots ,f_n$, we consider the unique morphism of monoids $h:\mathrsfs{F}_n\to\mathrsfs{R}(K)$ with $h(x_i)=f_i$; then our tree is the Cayley tree $C(\mathrsfs{F}_n)$, where the node corresponding to $\gamma\in\mathrsfs{F}_n$ is labelled by $h(\gamma )(x_0)$. This defines an ``evaluation'' map
\begin{equation}\label{Evaluation}
\Omega :\mathrsfs{F}_n\to\bbp^1(K),\quad\gamma\mapsto h(\gamma )(x_0).
\end{equation}
\begin{Definition}
Let $K$ be a number field, let $x_0\in\bbp^1(K)$ and let $f_1,\ldots ,f_n\in\mathrsfs{R}(K)$. The family $(f_1,\ldots ,f_n)\in\mathrsfs{R}(K)^n$ is called \emph{injective at $x_0$} if the map $\Omega$ as in (\ref{Evaluation}) is injective.
\end{Definition}
Clearly, a family $(f_1,\ldots ,f_n)\in\mathrsfs{R}(K)^n$ is injective at $x_0$ if and only if the $f_i$ generate a free submonoid $\Gamma\subset\mathrsfs{R}(K)$ and the orbit map $\Gamma\to\bbp^1(K)$ sending $\gamma$ to $\gamma (x_0)$ is injective. By conjugating with a suitable M\"{o}bius transformation, we can always assume that $x_0=1$.

Some interesting injective families over $\bbq$, all of whose members are M\"{o}bius transformations, have been found by S.H. Chan, see \cite{Chan2011}. These give rather forests with a finite number of components, instead of isolated trees. For the reader's convenience, we describe them in our terms.

For every integer $k\ge 2$, a family $\mathrsfs{G}_k$ is defined by consisting of these $2k$ M\"{o}bius transformations:
\begin{equation*}
\begin{bmatrix}
1&0\\
2&1
\end{bmatrix},\begin{bmatrix}
2&1\\
3&2
\end{bmatrix},\ldots ,\begin{bmatrix}
k-1&k-2\\
k&k-1
\end{bmatrix}, \begin{bmatrix}
k&k-1\\
k&k
\end{bmatrix},
\end{equation*}
\begin{equation*}
\begin{bmatrix}
k&k\\
k-1&k
\end{bmatrix},\begin{bmatrix}
k-1&k\\
k-2&k-1
\end{bmatrix},\ldots ,
\begin{bmatrix}
2&3\\
1&2
\end{bmatrix},\begin{bmatrix}
1&2\\
0&1
\end{bmatrix}.
\end{equation*}
It is injective on each of the initial values $x_1,\ldots ,x_{2k-1}$ given by
\begin{equation*}
\frac{1}{2},\frac{2}{3},\ldots ,\frac{k-1}{k},\frac{k}{k},\frac{k}{k-1},\ldots ,\frac{3}{2},\frac{2}{1}.
\end{equation*}
Furthermore, the orbits $\Gamma (x_1),\ldots ,\Gamma (x_{2k-1})$ are disjoint and their union is $\bbq_{>0}$. All this is proved in \cite[Theorem 4]{Chan2011}.

There is a similar infinite family of injective families; they enumerate the slightly more complicated set $\bbq_{>0}^{\text{even}}$ of all positive rational numbers $\frac{p}{q}$ with $p,q$ coprime and $pq$ even. For every integer $k\ge 1$, let $\mathrsfs{H}_k$ be the family of $2k+1$ M\"{o}bius transformations:
\begin{equation*}
\begin{bmatrix}
1&0\\
2&1
\end{bmatrix},\begin{bmatrix}
2&1\\
3&2
\end{bmatrix},\ldots ,\begin{bmatrix}
k&k-1\\
k+1&k
\end{bmatrix},\begin{bmatrix}
k+1&k\\
k&k+1
\end{bmatrix},
\end{equation*}
\begin{equation*}
\begin{bmatrix}
k&k-1\\
k+1&k
\end{bmatrix},\ldots ,\begin{bmatrix}
2&3\\
1&2
\end{bmatrix},
\begin{bmatrix}
1&2\\
0&1
\end{bmatrix}.
\end{equation*}
It is injective on each of the initial values $y_1,\ldots ,y_{2k}$ given as
\begin{equation*}
\frac{1}{2},\frac{2}{3},\ldots ,\frac{k}{k+1},\frac{k+1}{k},\ldots ,\frac{3}{2},\frac{2}{1}.
\end{equation*}
The orbits $\Gamma (y_1),\ldots ,\Gamma (y_{2k})$ are disjoint and their union is $\bbq_{>0}^{\text{even}}$. This can be found in \cite[Theorems 2 and 5]{Chan2011}. Theorem 2 in op. cit. is followed by a detailed discussion of the simplest case $k=1$.

Similar to the interpretation of the denominators and numerators of the Calkin-Wilf sequence as a combinatorial function, there are further combinatorial interpretations of these forests in \cite{Chan2011}.
 
\section{Heights on $\bbp^1$ and the Distribution of Points}

\noindent Let $K$ be a number field. A \emph{place} of $K$ is an equivalence class of valuations; denote the set of all places of $K$ by $\mathrsfs{P}(K)$. If $\mathfrak{p}$ is a place of $K$, write $K_{\mathfrak{p}}$ for the corresponding completion. For every place $\mathfrak{p}$ we choose a representing valuation $|\cdot |_{\mathfrak{p}}: K\to [0,\infty )$ in the following way:
\begin{enumerate}
\item If $\mathfrak{p}$ is real, there is a unique isomorphism of fields $K_{\mathfrak{p}}\simeq\bbr$, and we pull back along this isomorphism the usual absolute value $|x|=\max (x,-x)$ on the reals.
\item If $\mathfrak{p}$ is complex, there are two isomorphisms $\tau, \overline{\tau }:K_{\mathfrak{p}}\simeq\bbc$ of \emph{topological} fields, and we set $|x|_{\mathfrak{p}}=\tau (x)\overline{\tau }(x)$.
\item If $\mathfrak{p}$ is non-archimedean, let $q$ be the cardinality of the corresponding residue class field. Let $\pi\in K$ be a uniformising element; we normalise $|\cdot |_{\mathfrak{p}}$ in such a way that $|\pi |_{\mathfrak{p}}=\frac{1}{q}$.
\end{enumerate}
With these normalisations, we have the famous product formula, see \cite[Chapter III, Proposition 1.3]{Neukirch99}: for any $x\in K^{\times }$, all but a finite number of the $|x|_{\mathfrak{p}}$ are equal to $1$, and
\begin{equation}
\prod_{\mathfrak{p}\in\mathrsfs{P}(K)}|x|_{\mathfrak{p}}=1.
\end{equation}
As a consequence, the following construction gives a well-defined function on $\bbp^n(K)$ which can be thought of as measuring the arithmetic complexity of a point.
\begin{Definition}
Let $K$ be a number field of degree $d$ and let $x\in\bbp^n(K)$. Choose $x_0,\ldots ,x_n\in K$ such that $x=(x_0:\cdots :x_n)$; the \emph{(absolute) height} of $x$ is the real number
\begin{equation}
H(x)=\sqrt[d]{\prod_{\mathfrak{p}\in\mathrsfs{P}(K)}\max (|x_0|_{\mathfrak{p}},\ldots ,|x_n|_{\mathfrak{p}})}.
\end{equation}
The \emph{(absolute) logarithmic height} of $x$ is the real number
\begin{equation}
h(x)=\log H(x).
\end{equation}
\end{Definition}
We always have $H(x)\ge 1$ and therefore $h(x)\ge 0$, with equality if and only if $x$ is a root of unity, see \cite[Theorem 3.8]{Silverman07}. The absolute height is defined in such a way that the functions $H\colon\bbp^1(K)\to [1,\infty )$ for varying $K$ glue together to $H\colon\bbp^n(\overline{\bbq })\to [1,\infty )$, similarly for $h$.

For $K=\bbq$, there is a description of the height which is much more intuitive and makes computations much easier: if $x\in\bbp^n(\bbq )$, we can write it as $x=(x_0:\cdots :x_n)$ with $x_0,\ldots ,x_n\in\bbz$ coprime. Then
\begin{equation}
H(x)=\max (|x_0|_{\infty },\ldots ,|x_n|_{\infty }).
\end{equation}
Here, of course, $|\cdot |_{\infty }$ is the usual absolute value on $\bbz\subset\bbr$, i.e. $|a|_{\infty }=\max (a,-a)$.

We now examine how $H(f(x))$ relates to $H(x)$, where $f$ is a rational function. First we consider the case of M\"{o}bius transforms. By identifying the matrix entries with coordinates, we can view $\GL_2(K)$ as a subset of $K^4$. This is compatible with the action of $K^{\times }$, on the matrix group by multiplication with scalar matrices, and on the linear space by multiplication with scalars. So we can view $\PGL_2(K)=\GL_2(K)/K^{\times }$ as a subset of $\bbp^3(K)$ and define the height of an element of $\PGL_2(K)$ as the height of the corresponding point in $\bbp^3(K)$. By the simple description of heights for $K=\bbq$, we get an equally simple description of the height of an element $\gamma\in\PGL_2(\bbq )$: represent $\gamma$ by a matrix
\begin{equation*}
\begin{pmatrix}
a&b\\
c&d
\end{pmatrix}\in\GL_2(\bbq )
\end{equation*}
with $a,b,c,d\in\bbz$ having greatest common divisor $1$. Then
$$H(\gamma )=H((a:b:c:d))=\max (|a|_{\infty },|b|_{\infty },|c|_{\infty },|d|_{\infty }).$$
\begin{Lemma}\label{HGammaGleichHGammaInvers}
Let $K$ be a number field of degree $d$ and $\gamma ,\delta\in\PGL_2( K)$. Then $H(\gamma )=H(\gamma^{-1})$.
\end{Lemma}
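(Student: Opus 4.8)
The plan is to reduce the claim to two evident symmetries of the height function on $\bbp^3(K)$, exploiting that the inverse of a $(2\times 2)$-matrix is, up to a scalar, its adjugate — and that in $\PGL_2(K)$ scalars are invisible.

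First I would represent $\gamma$ by a matrix $M=\begin{pmatrix}a&b\\c&d\end{pmatrix}\in\GL_2(K)$, so that by the definition of the height on $\PGL_2(K)$ the element $\gamma$ corresponds to the point $(a:b:c:d)\in\bbp^3(K)$ and $H(\gamma)=H\bigl((a:b:c:d)\bigr)$. Next I would write $M^{-1}=(\det M)^{-1}\begin{pmatrix}d&-b\\-c&a\end{pmatrix}$ and observe that, since elements of $\PGL_2(K)$ are defined only up to a scalar, $\gamma^{-1}$ is equally well represented by the adjugate $\begin{pmatrix}d&-b\\-c&a\end{pmatrix}$; hence $\gamma^{-1}$ corresponds to the point $(d:-b:-c:a)\in\bbp^3(K)$ and $H(\gamma^{-1})=H\bigl((d:-b:-c:a)\bigr)$. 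Here one uses that $H$ is a genuine function on $\bbp^3(K)$, i.e.\ independent of the chosen representative, which is exactly what the product formula secures, as already recorded above.

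It then remains to check that $H\bigl((a:b:c:d)\bigr)=H\bigl((d:-b:-c:a)\bigr)$, which I would deduce from two elementary invariances of the height visible directly in the defining formula: invariance under permuting the homogeneous coordinates (because $\max$ is symmetric), and invariance under multiplying any single coordinate by $-1$ (because $|-x|_{\mathfrak{p}}=|x|_{\mathfrak{p}}$ for every place $\mathfrak{p}$ and every $x\in K$: this is clear from cases (i) and (ii) at archimedean places, and at a non-archimedean $\mathfrak{p}$ it holds because $-1$ is a unit, so $|-1|_{\mathfrak{p}}=1$ and multiplicativity gives the claim). Composing a coordinate permutation with sign changes sends $(a:b:c:d)$ to $(d:-b:-c:a)$ without changing the height, which finishes the argument. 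There is no serious obstacle here; the only step deserving emphasis is the replacement of $M^{-1}$ by its adjugate, which is legitimate only because we work modulo scalars in $\PGL_2$ — a cleaner way to phrase the whole proof is to note that $\gamma\mapsto\gamma^{-1}$ is induced by the linear involution $(a,b,c,d)\mapsto(d,-b,-c,a)$ of $K^4$, which descends to a height-preserving automorphism of $\bbp^3(K)$.
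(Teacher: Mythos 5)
Your proof is correct and takes essentially the same route as the paper: represent $\gamma$ by a matrix, replace $M^{-1}$ by the adjugate (legitimate modulo scalars), and observe that the adjugate differs from $M$ only by a coordinate permutation and sign changes, which leave the height unchanged. The paper states this more tersely; you merely supply the routine verification that permutations and sign flips are height-preserving.
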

\begin{proof}
If $\gamma\in\PGL_2(K)$ is represented by the matrix $A$, then $\gamma^{-1}$ is represented by the matrix $A^{-1}=(\det A)^{-1}A^{\sharp }$, where the matrix $A^{\sharp }$ is obtained by permuting the entries of $A$ in a well-known fashion and multiplying two of them with $-1$. But by the definition of $\PGL_2$, we see that $\gamma^{-1}$ is also represented by $A^{\sharp }$, whence $H(\gamma )=H(\gamma^{-1})$.
\end{proof}
\begin{Proposition}\label{DistortionOfHeightByMoebius}
Let $K$ be a number field of degree $d$, let $x\in\bbp^1(K)$ and $\gamma\in\PGL_2(K)$. Then
$$\frac{1}{2H(\gamma )}H(x)\le H(\gamma (x))\le 2H(\gamma )H(x).$$
\end{Proposition}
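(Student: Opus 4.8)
The plan is to prove the upper estimate $H(\gamma(x))\le 2H(\gamma)H(x)$ by a direct place-by-place computation, and then to obtain the lower estimate for free by applying the upper one to $\gamma^{-1}$ together with Lemma~\ref{HGammaGleichHGammaInvers}.

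For the upper bound, write $x=(x_0:x_1)$ with $x_0,x_1\in K$ not both zero, and represent $\gamma$ by a matrix $\begin{pmatrix}a&b\\c&d\end{pmatrix}\in\GL_2(K)$; then $\gamma(x)=(ax_0+bx_1:cx_0+dx_1)$ and $H(\gamma)=H\big((a:b:c:d)\big)$. All three heights are $d$-th roots of products over $\mathfrak p\in\mathrsfs P(K)$ of local maxima, so it suffices to bound, for each place $\mathfrak p$,
\[
\max\big(|ax_0+bx_1|_{\mathfrak p},\,|cx_0+dx_1|_{\mathfrak p}\big)\ \le\ c_{\mathfrak p}\cdot\max\big(|a|_{\mathfrak p},|b|_{\mathfrak p},|c|_{\mathfrak p},|d|_{\mathfrak p}\big)\cdot\max\big(|x_0|_{\mathfrak p},|x_1|_{\mathfrak p}\big),
\]
where $c_{\mathfrak p}=1$ if $\mathfrak p$ is non-archimedean (the ultrametric inequality), $c_{\mathfrak p}=2$ if $\mathfrak p$ is real (the triangle inequality), and $c_{\mathfrak p}=4$ if $\mathfrak p$ is complex (the triangle inequality for the ordinary complex absolute value, whose square is our normalised $|\cdot|_{\mathfrak p}$). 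Taking the product over all $\mathfrak p$, the aggregate constant is $\prod_{\mathfrak p}c_{\mathfrak p}=\prod_{\mathfrak p\mid\infty}2^{[K_{\mathfrak p}:\bbr]}=2^{r_1+2r_2}=2^d$, and extracting the $d$-th root gives $H(\gamma(x))\le 2H(\gamma)H(x)$. The product formula guarantees that all heights involved are independent of the chosen homogeneous coordinates and matrix representative, so this manipulation is legitimate.

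For the lower bound, note that $\gamma^{-1}\big(\gamma(x)\big)=x$ in $\bbp^1(K)$. Applying the inequality just proved with $\gamma^{-1}$ in place of $\gamma$ and $\gamma(x)$ in place of $x$ yields $H(x)\le 2H(\gamma^{-1})H(\gamma(x))$; since $H(\gamma^{-1})=H(\gamma)$ by Lemma~\ref{HGammaGleichHGammaInvers}, rearranging gives $H(\gamma(x))\ge\frac{1}{2H(\gamma)}H(x)$, as desired.

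The only point that requires a little care is the bookkeeping at the archimedean places: with the normalisation fixed in the paper a complex place contributes the \emph{square} of the usual complex absolute value, so the relevant constant there is $4$ rather than $2$; this is precisely cancelled by the fact that complex places count with multiplicity two in $\sum_{\mathfrak p\mid\infty}[K_{\mathfrak p}:\bbr]=d$, leaving the clean universal constant $2$ after the $d$-th root. Apart from this, the argument uses nothing but the triangle (and ultrametric) inequalities and the product formula.
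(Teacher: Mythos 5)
Your proof is correct and follows essentially the same route as the paper: a place-by-place triangle (or ultrametric) inequality with constants $1$, $2$, $4$ at the non-archimedean, real, and complex places respectively, followed by taking the product and $d$-th root, then deducing the lower bound by applying the upper bound to $\gamma^{-1}$ and invoking Lemma~\ref{HGammaGleichHGammaInvers}. You make the bookkeeping $\prod_{\mathfrak p}c_{\mathfrak p}=2^{r_1+2r_2}=2^d$ slightly more explicit than the paper does, which is a nice touch.
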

\begin{proof}
We only need to show the second inequality; the first will follow by replacing $\gamma$ by $\gamma^{-1}$ and using Lemma \ref{HGammaGleichHGammaInvers}. So choose a representative matrix
$$\begin{pmatrix}
a&b\\
c&d
\end{pmatrix}\in\GL_2(K)$$
for $\gamma$. Write $x=(x_0:x_1)$. Then for any place $\mathfrak{p}$ of $K$ we get
$$\max (|ax_0+bx_1|_{\mathfrak{p}},|cx_0+dx_1|_{\mathfrak{p}})\le t_{\mathfrak{p}}\cdot \max (|a|_{\mathfrak{p}},|b|_{\mathfrak{p}},|b|_{\mathfrak{p}},|d|_{\mathfrak{p}})\cdot\max (|x_0|_{\mathfrak{p}},|x_1|_{\mathfrak{p}})$$
by the triangle inequality; here $t_{\mathfrak{p}}$ is $1$ if $\mathfrak{p}$ is non-archimedean, $2$ if $\mathfrak{p}$ is real and $4$ if $\mathfrak{p}$ is complex. Taking the product over all $\mathfrak{p}$ and then taking $d$-th roots yields the desired result.
\end{proof}
Thus M\"{o}bius transformations can only change the height by a multiplicative factor. With some more effort, one obtains the following special case of \cite[Theorem 3.11]{Silverman07}:
\begin{Theorem}\label{TheoremSilverman}
Let $K$ be a number field and $f\in\mathrsfs{R}(K)$ a rational map of degree $d$. Then there exist constants $c_1,c_2>0$ such that for all $x\in\bbp^1(K)$,
\begin{equation*}
c_1\cdot H(x)^d\le H(f(x))\le c_2\cdot H(x)^d.
\end{equation*}
\end{Theorem}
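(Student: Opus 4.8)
The plan is to represent $f$ by a pair of coprime homogeneous polynomials and then to compare heights place by place, using the triangle inequality for the upper bound and a Nullstellensatz-type identity for the lower bound. First I would write $f=(F_0:F_1)$ with $F_0,F_1\in K[X_0,X_1]$ homogeneous of degree $d$ and without common factor; the existence of such a representation, with $F_0$ and $F_1$ having no common zero in $\bbp^1(\overline{\bbq})$, is precisely the assertion that $\deg f=d$. For $x=(x_0:x_1)\in\bbp^1(K)$ one then has $f(x)=(F_0(x_0,x_1):F_1(x_0,x_1))$, and the point on the right is legitimate exactly because $F_0$ and $F_1$ do not vanish simultaneously. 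Since, by the product formula, $H(y)^{[K:\bbq]}=\prod_{\mathfrak{p}\in\mathrsfs{P}(K)}\max(|y_0|_{\mathfrak{p}},|y_1|_{\mathfrak{p}})$ for any representative $(y_0:y_1)$, everything reduces to comparing, at each place $\mathfrak{p}$, the quantity $M_{\mathfrak{p}}(x):=\max(|x_0|_{\mathfrak{p}},|x_1|_{\mathfrak{p}})$ with $N_{\mathfrak{p}}(x):=\max(|F_0(x)|_{\mathfrak{p}},|F_1(x)|_{\mathfrak{p}})$.

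For the upper bound, at each place $\mathfrak{p}$ I would expand $F_0$ and $F_1$ into their at most $d+1$ monomials and apply the ($\mathfrak{p}$-adic or archimedean) triangle inequality, obtaining $N_{\mathfrak{p}}(x)\le\kappa_{\mathfrak{p}}\cdot M_{\mathfrak{p}}(x)^d$, where $\kappa_{\mathfrak{p}}$ depends only on the $|\cdot|_{\mathfrak{p}}$-values of the coefficients of $F_0,F_1$ together with a combinatorial factor (equal to $1$ at non-archimedean places and a fixed power of $2$ at archimedean ones). In particular $\kappa_{\mathfrak{p}}=1$ for all but finitely many $\mathfrak{p}$, so the product $\prod_{\mathfrak{p}}\kappa_{\mathfrak{p}}$ is a well-defined positive real. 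Taking the product over all $\mathfrak{p}$ and extracting $[K:\bbq]$-th roots gives $H(f(x))\le c_2\,H(x)^d$ with $c_2=(\prod_{\mathfrak{p}}\kappa_{\mathfrak{p}})^{1/[K:\bbq]}$, independent of $x$.

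For the lower bound, coprimality of $F_0$ and $F_1$ means (dehomogenising and working in the principal ideal domain $K[t]$, then rehomogenising) that the ideal they generate contains a power of the irrelevant ideal: there is an integer $e\ge d$ and homogeneous polynomials $G_{ij}\in K[X_0,X_1]$ of degree $e-d$ with $X_i^{\,e}=G_{i0}F_0+G_{i1}F_1$ for $i=0,1$; the coefficients can be taken in $K$ since the identity amounts to a solvable system of $K$-linear equations. Evaluating at $x$ and using the triangle inequality at each place yields $M_{\mathfrak{p}}(x)^e\le\lambda_{\mathfrak{p}}\cdot M_{\mathfrak{p}}(x)^{e-d}\cdot N_{\mathfrak{p}}(x)$, and since $(x_0,x_1)\neq(0,0)$ we may cancel the positive factor $M_{\mathfrak{p}}(x)^{e-d}$ to get $M_{\mathfrak{p}}(x)^d\le\lambda_{\mathfrak{p}}\,N_{\mathfrak{p}}(x)$, with $\lambda_{\mathfrak{p}}=1$ outside a finite set of places. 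Taking the product over all $\mathfrak{p}$ and $[K:\bbq]$-th roots gives $H(x)^d\le c_1^{-1}H(f(x))$ with $c_1=(\prod_{\mathfrak{p}}\lambda_{\mathfrak{p}})^{-1/[K:\bbq]}$.

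The main obstacle is the lower bound: one must produce the polynomial identity expressing the $X_i^{\,e}$ in terms of $F_0,F_1$ — this is where coprimality, i.e. the hypothesis $\deg f=d$, is genuinely used — and one must make sure its coefficients lie in $K$. The rest is bookkeeping: checking that only finitely many of the local constants $\kappa_{\mathfrak{p}},\lambda_{\mathfrak{p}}$ differ from $1$, which follows because the finitely many coefficients of the $F_i$ and $G_{ij}$ are $\mathfrak{p}$-adic units at all but finitely many $\mathfrak{p}$, so that the infinite products defining $c_1$ and $c_2$ converge.
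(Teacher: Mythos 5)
The paper does not actually prove this theorem; it states it as a special case of Theorem 3.11 in Silverman's \emph{The Arithmetic of Dynamical Systems} and simply refers to that book. Your proof is correct and is essentially the standard argument found there: represent $f$ by coprime homogeneous forms $F_0,F_1$ of degree $d$, get the upper bound from the triangle inequality applied monomial by monomial at each place, and get the lower bound from a Nullstellensatz-type identity putting powers of $X_0,X_1$ in the ideal $(F_0,F_1)$ with coefficients in $K$. Your bookkeeping --- that only finitely many local constants differ from $1$, that the combinatorial factor appears only at archimedean places, that $M_{\mathfrak{p}}(x)^{e-d}$ can be cancelled since $(x_0,x_1)\neq(0,0)$, and that the coefficients of the cofactors $G_{ij}$ may be taken in $K$ because the relevant linear system is defined over $K$ --- is all sound. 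One small sharpening you could make: coprimality of $F_0,F_1$ is equivalent to non-vanishing of the resultant $\operatorname{Res}(F_0,F_1)\in K^{\times}$, and the classical resultant identity yields $\operatorname{Res}(F_0,F_1)\cdot X_i^{2d-1}\in (F_0,F_1)$ with $K$-coefficient cofactors, so you may take $e=2d-1$ explicitly rather than invoking an unspecified power from the Nullstellensatz.
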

We now turn to estimating points in a fixed field of bounded height.
\begin{Theorem}\label{AsymptoticsForQ}
We have the following asymptotics as $N\to\infty $:
\begin{equation*}
\cardop\{ x\in\bbp^1(\bbq )\mid H(x)\le N\} =\frac{12}{\pi^2}N^2+O(N\log N).
\end{equation*}
\end{Theorem}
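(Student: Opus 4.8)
\noindent The plan is to reduce the statement to a classical count of coprime integer pairs in a square, and then evaluate that count by M\"{o}bius inversion. By the explicit description of heights on $\bbp^1(\bbq)$ recalled above, every point of $\bbp^1(\bbq)$ is represented by a pair $(a:b)$ with $a,b\in\bbz$ coprime, uniquely up to replacing $(a,b)$ by $(-a,-b)$, and then $H((a:b))=\max(|a|,|b|)$. Since $(0,0)$ is the only pair fixed by this sign change and is not coprime, we find
\begin{equation*}
\cardop\{x\in\bbp^1(\bbq)\mid H(x)\le N\}=\tfrac12\,V(N),
\end{equation*}
where $V(N)$ denotes the number of pairs $(a,b)\in\bbz^2$ with $\gcd(a,b)=1$ and $\max(|a|,|b|)\le N$. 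So it suffices to prove that $V(N)=\tfrac{24}{\pi^2}N^2+O(N\log N)$.

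Next I would relate $V(N)$ to the total number of nonzero lattice points in the square, $T(N):=(2N+1)^2-1$, which counts the pairs $(a,b)\in\bbz^2\smallsetminus\{(0,0)\}$ with $\max(|a|,|b|)\le N$. Writing such a pair uniquely as $(a,b)=e\cdot(a',b')$ with $e=\gcd(a,b)\ge 1$ and $(a',b')$ coprime, the bound $\max(|a|,|b|)\le N$ becomes $\max(|a'|,|b'|)\le\lfloor N/e\rfloor$, and conversely; this gives the identity $T(N)=\sum_{e\ge 1}V(\lfloor N/e\rfloor)$, a finite sum since $V(0)=0$. By the standard M\"{o}bius inversion formula for such floor-sums, this is equivalent to
\begin{equation*}
V(N)=\sum_{d=1}^{N}\mu(d)\,T(\lfloor N/d\rfloor).
\end{equation*}

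Finally I would estimate this sum. From $T(\lfloor N/d\rfloor)=(2\lfloor N/d\rfloor+1)^2-1=4\lfloor N/d\rfloor^2+4\lfloor N/d\rfloor$ together with $\lfloor N/d\rfloor=N/d+O(1)$, one obtains $T(\lfloor N/d\rfloor)=4N^2/d^2+O(N/d)$ uniformly for $1\le d\le N$, and hence
\begin{equation*}
V(N)=4N^2\sum_{d=1}^{N}\frac{\mu(d)}{d^2}+O\!\Bigl(N\sum_{d=1}^{N}\frac1d\Bigr).
\end{equation*}
Now $\sum_{d\ge 1}\mu(d)/d^2=1/\zeta(2)=6/\pi^2$, with tail $\sum_{d>N}\mu(d)/d^2=O(1/N)$, so the first term equals $4N^2\bigl(6/\pi^2+O(1/N)\bigr)=\tfrac{24}{\pi^2}N^2+O(N)$; and $\sum_{d\le N}1/d=O(\log N)$ makes the error term $O(N\log N)$. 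Putting this together yields $V(N)=\tfrac{24}{\pi^2}N^2+O(N\log N)$, and halving gives the theorem.

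The one mildly delicate point is the error bookkeeping in the last step: one must check that, after being multiplied by $\mu(d)$ and summed over $d\le N$, the error from replacing $\lfloor N/d\rfloor$ by $N/d$ inside the square really contributes only $O(N\log N)$. The logarithm is genuine --- it originates from the harmonic sum rather than from any arithmetic subtlety --- and the sole arithmetic input needed is the Euler value $\zeta(2)=\pi^2/6$.
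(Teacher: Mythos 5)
Your proof is correct and is exactly the classical argument the paper defers to: the paper's ``proof'' is a citation to the proof of Apostol's Theorem 3.9 (the $6/\pi^2$ density of visible lattice points, obtained by the same M\"obius-inversion computation over floor-sums), and your derivation simply fills in that computation, adapted to $\bbp^1(\bbq)$ via the two-to-one passage from coprime integer pairs in the symmetric box $[-N,N]^2$ to projective points of height at most $N$.
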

\begin{proof}
This is classical and can, up to reformulation into more elementary language, be found in \cite{Apostol76}, in the proof of Theorem 3.9.
\end{proof}
For other number fields, there is a similar estimate:
\begin{Theorem}[Schanuel]\label{AsymptoticsForK}
Let $K$ be a number field of degree $d_K>1$. Then for $N\to\infty$ we have
\begin{equation*}
\cardop\{ x\in\bbp^1(K)\mid H(x)\le N\}=c_K\cdot N^{2d_K}+O(N^{2d_K-1}),
\end{equation*}
with the constant
\begin{equation*}
c_K=\frac{2^{2r_1+r_2-1}(2\pi )^{r_2}}{\sqrt{|\Delta_K|}}\cdot\frac{\operatorname{Res}_{s=1}\zeta_K(s)}{\zeta_K(2)}=\frac{h_K\cdot R_K\cdot 2^{3r_1+r_2-1}\cdot (2\pi )^{2r_2}}{w_K\cdot |\Delta_K|\cdot \zeta_K(2)}.
\end{equation*}
Here, as usual, $r_1$ is the number of real places, $r_2$ the number of complex places, $\Delta_K$ the discriminant, $\zeta_K$ the Dedekind zeta function, $h_K$ the class number, $R_K$ the regulator and $w_K$ the number of roots of unity in $K$.
\end{Theorem}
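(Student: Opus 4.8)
The plan is to run Schanuel's classical geometry-of-numbers argument, specialised to $\bbp^1$. \emph{An arithmetic parametrisation.} Fix integral ideals $\mathfrak{c}_1,\dots,\mathfrak{c}_{h_K}$ representing the ideal classes of $\mathcal{O}_K$. Every $P\in\bbp^1(K)$ admits a representative $P=(x_0:x_1)$ with $(x_0,x_1)\in(\mathfrak{c}_i^{-1})^2$ for a unique $i$ and with $x_0\mathcal{O}_K+x_1\mathcal{O}_K=\mathfrak{c}_i^{-1}$, unique up to the diagonal action of $\mathcal{O}_K^{\times}$; with the section's normalisations the non-archimedean places contribute $\prod_{\mathfrak{p}\nmid\infty}\max(|x_0|_{\mathfrak{p}},|x_1|_{\mathfrak{p}})=N(x_0\mathcal{O}_K+x_1\mathcal{O}_K)^{-1}=N(\mathfrak{c}_i)$, so that
$$H(P)^{d_K}=N(\mathfrak{c}_i)\cdot N_\infty(x_0,x_1),\qquad N_\infty(\xi_0,\xi_1):=\prod_{\mathfrak{p}\mid\infty}\max(|\xi_0|_{\mathfrak{p}},|\xi_1|_{\mathfrak{p}}).$$
Write $T_i:=N^{d_K}/N(\mathfrak{c}_i)$ and let $\mu$ be the Möbius function of the monoid of integral ideals. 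A nonzero pair in $(\mathfrak{c}_i^{-1})^2$ is recorded by the integral ideal $\mathfrak{d}:=(x_0\mathcal{O}_K+x_1\mathcal{O}_K)\mathfrak{c}_i$ it generates together with the resulting primitive pair in $(\mathfrak{d}\mathfrak{c}_i^{-1})^2$, so Möbius inversion over ideals gives
$$\cardop\{P\in\bbp^1(K):H(P)\le N\}=\sum_{i=1}^{h_K}\ \sum_{\mathfrak{d}}\mu(\mathfrak{d})\,\cardop\Bigl(\bigl\{(x_0,x_1)\in(\mathfrak{d}\mathfrak{c}_i^{-1})^2\setminus\{0\}:N_\infty\le T_i\bigr\}\big/\mathcal{O}_K^{\times}\Bigr).$$
Since every nonzero pair in $\mathfrak{b}^2$ satisfies $N_\infty\ge N(\mathfrak{b})$, only ideals $\mathfrak{d}$ with $N(\mathfrak{d})\le N^{d_K}$ contribute.

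\emph{Lattice-point counting.} The fractional ideal $\mathfrak{b}=\mathfrak{d}\mathfrak{c}_i^{-1}$ embeds into $K_{\bbr}:=K\otimes_{\bbq}\bbr\cong\bbr^{r_1}\times\bbc^{r_2}$ as a lattice of covolume $2^{-r_2}\sqrt{|\Delta_K|}\,N(\mathfrak{b})$, so $\mathfrak{b}^2\subseteq K_{\bbr}^2\cong\bbr^{2d_K}$ has covolume $\bigl(2^{-r_2}\sqrt{|\Delta_K|}\,N(\mathfrak{b})\bigr)^2$. The region $\{N_\infty\le T\}$ is invariant under the diagonal $\mathcal{O}_K^{\times}$-action, hence unbounded, but Dirichlet's unit theorem furnishes a semialgebraic fundamental domain $\mathcal{F}$ for that action, conical under real homothety, on which $\mathcal{F}\cap\{N_\infty\le T\}$ is bounded with boundary fit for Davenport's lattice-point estimate. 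As $N_\infty$ is homogeneous of degree $d_K$ we have $\mathcal{F}\cap\{N_\infty\le T\}=T^{1/d_K}\bigl(\mathcal{F}\cap\{N_\infty\le1\}\bigr)$, so with $V:=\operatorname{vol}\bigl(\mathcal{F}\cap\{N_\infty\le1\}\bigr)$,
$$\cardop\Bigl(\bigl\{(x_0,x_1)\in\mathfrak{b}^2\setminus\{0\}:N_\infty\le T\bigr\}\big/\mathcal{O}_K^{\times}\Bigr)=\frac{V\,T^2}{\bigl(2^{-r_2}\sqrt{|\Delta_K|}\,N(\mathfrak{b})\bigr)^2}+O\bigl((T/N(\mathfrak{b}))^{2-1/d_K}\bigr).$$

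\emph{The volume and the constant.} I would compute $V$ by passing to ``polar coordinates'' on $K_{\bbr}^{\times}$ — the archimedean absolute values together with the signs at the real places and the arguments at the complex places — so that the angular integrations produce the powers of $2$ and $\pi$ while integration over a fundamental parallelotope of the logarithmic unit lattice produces the regulator; the outcome is $V=2^{3r_1+r_2-1}\pi^{2r_2}R_K/w_K$, the $w_K$ appearing because $\mathcal{F}$ is a fundamental domain for the \emph{full} unit group, torsion included. Inserting $T=T_i$ and $N(\mathfrak{b})=N(\mathfrak{d})N(\mathfrak{c}_i)^{-1}$ makes $N(\mathfrak{c}_i)$ cancel in the main term, which becomes $V\cdot2^{2r_2}N^{2d_K}/(|\Delta_K|\,N(\mathfrak{d})^2)$; summing over $\mathfrak{d}$ via $\sum_{\mathfrak{d}}\mu(\mathfrak{d})N(\mathfrak{d})^{-2}=\zeta_K(2)^{-1}$ and over the $h_K$ classes yields the leading term
$$\frac{h_K\,R_K\,2^{3r_1+r_2-1}\pi^{2r_2}\cdot2^{2r_2}}{w_K\,|\Delta_K|\,\zeta_K(2)}\,N^{2d_K}=c_K\,N^{2d_K}.$$

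\emph{The error term, and the rôle of $d_K>1$.} This is the delicate point. With $T=T_i$ and $N(\mathfrak{b})=N(\mathfrak{d})N(\mathfrak{c}_i)^{-1}$ the error above is $O\bigl(N^{2d_K-1}N(\mathfrak{d})^{-(2-1/d_K)}\bigr)$, so summing over all integral $\mathfrak{d}$ contributes $O\bigl(N^{2d_K-1}\sum_{\mathfrak{d}}N(\mathfrak{d})^{-(2-1/d_K)}\bigr)$, and this Dirichlet series converges \emph{precisely because} $2-1/d_K>1$, i.e. because $d_K\ge2$. One must also check that the tail $N(\mathfrak{d})>N^{d_K}$ of the Möbius sum contributes only $O(N^{d_K})$ to the main term (using $\sum_{N(\mathfrak{d})>N^{d_K}}N(\mathfrak{d})^{-2}\ll N^{-d_K}$) and that the number of integral ideals of norm $m$ is $O(m^{\varepsilon})$; both are routine. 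The two genuinely technical ingredients remain the construction of $\mathcal{F}$ with boundary amenable to Davenport's bound and the polar-coordinate evaluation of $V$; granting these, collecting the main and error terms gives the stated asymptotic with error $O(N^{2d_K-1})$. (For $K=\bbq$ the exponent degenerates to $1$, the series diverges logarithmically, and one recovers instead the $O(N\log N)$ of Theorem \ref{AsymptoticsForQ}; this degeneration is exactly why the case $d_K=1$ is excluded here.)
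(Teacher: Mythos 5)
Your sketch is a faithful reconstruction of Schanuel's geometry-of-numbers argument specialised to $\bbp^1$; the paper itself gives no proof but simply cites \cite{Schanuel79} (with a remark that the two expressions for $c_K$ are reconciled by the class number formula and that Schanuel's height normalisation differs, explaining the exponent $2d_K$). So you are not taking a different route — you are writing out the route the paper merely points to.

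The bookkeeping checks out. The non-archimedean product over $\mathfrak{p}\nmid\infty$ of $\max(|x_0|_{\mathfrak{p}},|x_1|_{\mathfrak{p}})$ is indeed $N(x_0\mathcal{O}_K+x_1\mathcal{O}_K)^{-1}$, giving $H(P)^{d_K}=N(\mathfrak{c}_i)N_\infty$; the covolume of $\mathfrak{b}^2$ is $\bigl(2^{-r_2}\sqrt{|\Delta_K|}N(\mathfrak{b})\bigr)^2$; the homogeneity of $N_\infty$ of degree $d_K$ gives the $T^{2}$ main term and the $(T/N(\mathfrak{b}))^{2-1/d_K}$ error; and the cancellation of $N(\mathfrak{c}_i)$ followed by $\sum_{\mathfrak{d}}\mu(\mathfrak{d})N(\mathfrak{d})^{-2}=\zeta_K(2)^{-1}$ reproduces exactly
\begin{equation*}
c_K=\frac{h_K R_K 2^{3r_1+r_2-1}(2\pi)^{2r_2}}{w_K|\Delta_K|\zeta_K(2)},
\end{equation*}
using $\pi^{2r_2}\cdot 2^{2r_2}=(2\pi)^{2r_2}$. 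Your observation that $\sum_{\mathfrak{d}}N(\mathfrak{d})^{-(2-1/d_K)}$ converges precisely when $d_K>1$, with the $d_K=1$ case degenerating to the $O(N\log N)$ of Theorem \ref{AsymptoticsForQ}, is the correct explanation of why $\bbq$ is handled separately. You are also right to flag the two genuinely technical points you take as black boxes — the construction of a fundamental domain $\mathcal{F}$ for the unit action whose boundary is nice enough for a Davenport-type lattice estimate, and the polar-coordinate evaluation $V=2^{3r_1+r_2-1}\pi^{2r_2}R_K/w_K$. Those are precisely the two places where Schanuel's paper does real work, so citing them is reasonable for a sketch; a complete proof would need to supply them. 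Accepting that, your argument is correct, and it usefully makes explicit what the paper outsources to the literature.
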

\begin{proof}
This is a special case of the main result in \cite{Schanuel79}; the equality of the two expressions for $c_K$ follows from the class number formula. Note that Schanuel uses a different normalisation for the height, whence the different exponent.
\end{proof}
Note that for $K=\bbq$, the formula for $c_K$ gives $12/\pi^2$, as above; the only reason that we have to treat this case seperately is that the error term has a different shape. And, of course, Theorem \ref{AsymptoticsForQ} is much more elementary than Theorem \ref{AsymptoticsForK}.

The notion of height helps us to measure the ``size'' of a subset $A\subseteq\bbp^1(K)$.
\begin{Definition}
Let $K$ be a number field and $A\subseteq\bbp^1(K)$. Its \emph{lower height density} is the number
\begin{equation*}
\delta_h^{-}(A)=\liminf_{N\to\infty }\frac{\cardop\{ x\in A\mid H(x)\le N\} }{\cardop\{ x\in\bbp^1(K)\mid H(x)\le N\} }\in [0,1];
\end{equation*}
its \emph{upper height density} is the number
\begin{equation*}
\delta_h^{+}(A)=\limsup_{N\to\infty }\frac{\cardop\{ x\in A\mid H(x)\le N\} }{\cardop\{ x\in\bbp^1(K)\mid H(x)\le N\} }\in [0,1].
\end{equation*}
If these two are equal, we say that ``$A$ has a height density'' and call the quantity $\delta_h(A)=\delta_h^-(A)=\delta_h^+(A)$ the height density of $A$.
\end{Definition}
By Theorems \ref{AsymptoticsForQ} and \ref{AsymptoticsForK}, we see that $A$ has a height density if and only if the limit
\begin{equation*}
\lim_{N\to\infty }\frac{\cardop\{ x\in A\mid H(x)\le N\} }{N^2}
\end{equation*}
exists, and the height density is then this limit divided by the constant $c_K$.

We now give some examples for height density.
\begin{enumerate}
\item If $K$ is given as a subfield of $\bbr$, then the set of all positive $x\in K$ has height density $\frac{1}{2}$. This is because $H(x)=H(-x)$.
\item If $K$ is a number field of degree $d$, $\gamma\in\PGL_2(K)$ is a M\"{o}bius transformation and $A\subseteq\bbp^1(K)$ is any subset, then
$$\delta_h^-(\gamma (A))\ge\frac{\delta_h^-(A)}{(2H(\gamma ))^{2d}}\text{ and }\delta_h^+(\gamma (A))\le (2H(\gamma ))^{2d}\delta_h^+(A).$$
This follows from Proposition \ref{DistortionOfHeightByMoebius} together with the observation that the number of points of height below $N$ grows like $N^{2d}$. In particular if $A$ has nonzero lower height density, then so has $\gamma (A)$.
\item Combining the two previous examples, we see: if $K\subset\bbr$ is a number field and $a<b$, then the subset $K\cap [a,b]\subset\bbp^1(K)$ has positive lower height density, since there exists a M\"{o}bius transformation in $\PGL_2(K)$ which maps $[0,\infty )$ into $[a,b]$.
\item The set $\bbq_{>0}^{\text{even}}$ introduced before has positive height density in $\bbp^1(\bbq )$. This can be seen as follows. Let us estimate the number of pairs $(p,q)\in\bbn^2$ with $p,q$ coprime, $q\le p\le N$ and $p$ even. If we can show that this number is bounded below by some positive constant times $N^2$, we are done.

Now this number is equal to
\begin{equation*}
\sum_{\substack{1<p\le N\\ p\text{ even}}}\varphi (p)\ge\sum_{\substack{1<p\le N\\ p\text{ even}}}\varphi\left(\frac{p}{2}\right) =\sum_{n=1}^{\lfloor N/2\rfloor }\varphi (n)=\frac{3}{\pi^2}\cdot\left(\frac{N}{2}\right)^2+O(N\log N).
\end{equation*}
The first inequality is derived from the elementary inequality $\varphi (2n)\ge \varphi (n)$, and the final equality follows from \cite[Theorem 3.7]{Apostol76}.
\end{enumerate}

\section{Constraints on Injective Families}

In this final section we shall show that if an injective family consists only of maps of degree at least two, then its image in $\bbp^1(K)$ must have height density zero. So to get started, assume that $K$ is a number field and $(f_1,\ldots ,f_n)\in\mathrsfs{R}(K)^n$ is an injective family for some initial value $x_0\in\bbp^1(K)$, where $\deg f_i\ge 2$ for all $i$. Denote by $\Gamma$ the free monoid generated by the $f_i$ in $\mathrsfs{R}(K)$, and let $\lVert\gamma\rVert$ be the word norm on $\Gamma$. That is, for $\gamma =f_{i_1}f_{i_2}\cdots f_{i_r}$ set $\lVert\gamma\rVert =r$.

We prefer to work with logarithmic heights in this section. By Theorem \ref{TheoremSilverman}, we find a constant $c>0$ such that for all $1\le i\le n$ and all $x\in\bbp^1(K)$, the inequality
\begin{equation}\label{WachstumAusserhalbS}
h(f_i(x))\ge 2h(x)-c
\end{equation}
holds. By replacing $c$ with a larger constant if necessary, we may also assume that
$$c\ge 1.$$
Hence $\Gamma$ ``explodes'' heights outside the \emph{exceptional set}
\begin{equation*}
S=\{ x\in\bbp^1(K)\mid h(x)\le 2c\} .
\end{equation*}
By Theorem \ref{AsymptoticsForQ} or \ref{AsymptoticsForK}, depending on whether $K=\bbq$ or not, this is a finite set.
\begin{Lemma}\label{LemmaUeberWachstumAusserhalbS}
Under these assumptions, every element of $\Gamma$ takes the complement of $S$ to itself. In formul\ae :
\begin{equation}
\Gamma (\bbp^1(K)\smallsetminus S)\subseteq \bbp^1(K)\smallsetminus S.
\end{equation}
Furthermore, for any $x\in\bbp^1(K)\smallsetminus S$ and $\gamma\in\Gamma$ we have the inequality
\begin{equation}
h(\gamma (x))\ge\left(\frac{3}{2}\right)^{\lVert\gamma\rVert }\cdot h(x).
\end{equation}
\end{Lemma}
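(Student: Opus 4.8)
The plan is to prove both assertions simultaneously by induction on the word norm $\lVert\gamma\rVert$, with the single generator case supplying the inductive step. First I would record the base case: for $\lVert\gamma\rVert = 0$ we have $\gamma = \bbone$, and then $\gamma(\bbp^1(K)\smallsetminus S) = \bbp^1(K)\smallsetminus S$ trivially, while the height inequality reads $h(x) \ge h(x)$, which holds.

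For the inductive step I would first handle a single generator $f_i$. Take $x \in \bbp^1(K)\smallsetminus S$, so $h(x) > 2c$. By (\ref{WachstumAusserhalbS}) we have $h(f_i(x)) \ge 2h(x) - c$. Since $h(x) > 2c \ge c$ (using $c \ge 1$, though only $c > 0$ is needed here), we get $2h(x) - c > h(x) > 2c$, so $f_i(x) \in \bbp^1(K)\smallsetminus S$; this establishes the containment $\Gamma(\bbp^1(K)\smallsetminus S) \subseteq \bbp^1(K)\smallsetminus S$ for all of $\Gamma$ by composing. Moreover, $h(x) > 2c$ rearranges to $c < \tfrac12 h(x)$, hence $2h(x) - c > 2h(x) - \tfrac12 h(x) = \tfrac32 h(x)$; so $h(f_i(x)) \ge \tfrac32 h(x)$. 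This is the crucial quantitative gain: the exceptional set $S$ was chosen precisely large enough that the additive loss $-c$ is dominated by half the height, converting the multiplicative factor $2$ into an effective factor $\tfrac32$.

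Now for the inductive step on word norm: write $\gamma = f_i \circ \gamma'$ with $\lVert\gamma'\rVert = \lVert\gamma\rVert - 1$. Given $x \in \bbp^1(K)\smallsetminus S$, the induction hypothesis gives $\gamma'(x) \in \bbp^1(K)\smallsetminus S$ and $h(\gamma'(x)) \ge (\tfrac32)^{\lVert\gamma\rVert - 1} h(x)$. Applying the single-generator estimate above to the point $\gamma'(x)$ (which lies outside $S$), we obtain $h(\gamma(x)) = h(f_i(\gamma'(x))) \ge \tfrac32 h(\gamma'(x)) \ge \tfrac32 \cdot (\tfrac32)^{\lVert\gamma\rVert - 1} h(x) = (\tfrac32)^{\lVert\gamma\rVert} h(x)$, completing the induction.

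There is no real obstacle here; the content is entirely in the choice of $S$, which was already made before the statement. The only point requiring a modicum of care is making sure the strict inequality $h(x) > 2c$ (rather than $\ge$) is available to push $2h(x) - c$ strictly above both $h(x)$ and $2c$ — but this is exactly how membership in the complement of $S$ is defined, so it costs nothing. One could alternatively phrase the whole argument without induction by noting that any $\gamma \in \Gamma$ of word norm $r$ decomposes as a composition of $r$ generators and iterating the one-step bound $r$ times, but the inductive formulation is cleanest.
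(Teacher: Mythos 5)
Your proof is correct and follows essentially the same route as the paper: verify the one-step bound $h(f_i(x)) \ge \tfrac{3}{2}h(x)$ for $x \notin S$ using $h(x) > 2c$, and then compose. The only difference is cosmetic — you spell out the induction on word norm, while the paper disposes of it in a single sentence by noting it suffices to check $\lVert\gamma\rVert = 1$.
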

\begin{proof}
Let $x$ be in the complement of $S$, i.e. $h(x)>2c$. Then from (\ref{WachstumAusserhalbS}), we obtain
$$h(f_i(x))\ge 2h(x)-c>4c-c>2c.$$
In particular, $f_i(x)\notin S$. Since the $f_i$ generate $\Gamma$, this shows the first part.

The second inequality also needs only to be checked for $\gamma =f_i$ or, equivalently, $\lVert\gamma\rVert =1$. But using that $c<\frac{1}{2}h(x)$, we find that
$$h(f_i(x))\ge 2h(x)-c>2h(x)-\frac{1}{2}h(x)=\frac{3}{2}h(x),$$
which is just what is to be proved for $\lVert\gamma\rVert =1$.
\end{proof}
If one enlarges $S$ suitably, the estimate can of course be sharpened in such a way that the constant $\frac{3}{2}$ can be replaced by any $2-\varepsilon$ with $\varepsilon >0$.

Because the orbit map $\gamma\mapsto\gamma (x_0)$ is injective, it can hit $S$ only up to a finite word length. So there exists some $n_0\in\bbn$ with the property that whenever $\lVert\gamma\rVert \ge n_0$, then $\gamma (x_0)\notin S$ (and consequently $h(\gamma (x_0))>2c$).

\begin{Lemma}\label{LemmaOnExponentialGrowth}
Let $\gamma\in\Gamma$ with $\lVert\gamma\rVert >n_0$. Then
\begin{equation*}
h(\gamma (x_0))>\left(\frac{3}{2}\right)^{\lVert\gamma\rVert -n_0}.
\end{equation*}
\end{Lemma}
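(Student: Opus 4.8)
The plan is to peel off the innermost $n_0$ letters of $\gamma$ and then invoke the exponential growth estimate of Lemma \ref{LemmaUeberWachstumAusserhalbS}. Write $\gamma = f_{i_1}\circ\cdots\circ f_{i_r}$ with $r=\lVert\gamma\rVert>n_0$, and split it as $\gamma=\gamma_1\circ\gamma_2$, where $\gamma_2=f_{i_{r-n_0+1}}\circ\cdots\circ f_{i_r}$ has word norm exactly $n_0$ and $\gamma_1=f_{i_1}\circ\cdots\circ f_{i_{r-n_0}}$ has word norm $r-n_0\ge 1$. Both $\gamma_1$ and $\gamma_2$ lie in $\Gamma$, and since evaluation respects composition, $\gamma(x_0)=\gamma_1\bigl(\gamma_2(x_0)\bigr)$.

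First I would apply the defining property of $n_0$: since $\lVert\gamma_2\rVert=n_0\ge n_0$, the orbit point $\gamma_2(x_0)$ lies outside the exceptional set $S$, so that $h(\gamma_2(x_0))>2c$ and in particular $\gamma_2(x_0)\in\bbp^1(K)\smallsetminus S$. Then Lemma \ref{LemmaUeberWachstumAusserhalbS}, applied to this point and to the element $\gamma_1\in\Gamma$, gives
\begin{equation*}
h(\gamma(x_0))=h\bigl(\gamma_1(\gamma_2(x_0))\bigr)\ge\left(\frac{3}{2}\right)^{\lVert\gamma_1\rVert}h(\gamma_2(x_0))>\left(\frac{3}{2}\right)^{\lVert\gamma\rVert-n_0}\cdot 2c.
\end{equation*}
Finally, since $c\ge 1$ we have $2c\ge 2>1$, so the right-hand side is $\ge(3/2)^{\lVert\gamma\rVert-n_0}$, which is the claimed bound.

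There is essentially no serious obstacle here; the only points needing a little care are the bookkeeping of the composition order when splitting the word $\gamma$ (so that it is the suffix $\gamma_2$, not a prefix, that gets evaluated at $x_0$ first, which is what makes the property of $n_0$ applicable), and the trivial remark that $2c>1$, which lets us discard the leftover factor coming from $h(\gamma_2(x_0))>2c$. Note also that the strict inequality $\lVert\gamma\rVert>n_0$ is used only to ensure $\lVert\gamma_1\rVert\ge 1$; the same argument in fact covers the case $\lVert\gamma\rVert=n_0$ as well, where it degenerates to $h(\gamma(x_0))>2c>1=(3/2)^0$.
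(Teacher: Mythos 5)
Your proof is correct and follows exactly the paper's own argument: split $\gamma=\gamma_1\gamma_2$ with $\lVert\gamma_2\rVert=n_0$, use the defining property of $n_0$ to get $\gamma_2(x_0)\notin S$, apply Lemma \ref{LemmaUeberWachstumAusserhalbS} to $\gamma_1$ at the point $\gamma_2(x_0)$, and then discard the factor $2c\ge 2$. The small closing remark that the bound persists (trivially) when $\lVert\gamma\rVert=n_0$ is a harmless extra observation not made in the paper.
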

\begin{proof}
Set $N=\lVert\gamma\rVert -n_0$. Write $\gamma =\gamma_1\gamma_2$ with $\lVert\gamma_1\rVert =N$ and $\lVert\gamma_2\rVert =n_0$. Then
\begin{equation*}
h(\gamma (x_0))=h(\gamma_1(\gamma_2(x_0)))\ge\left(\frac{3}{2}\right)^{\lVert\gamma_1\rVert }\cdot h(\gamma_2(x_0))>\left(\frac{3}{2}\right)^N\cdot 2c>\left(\frac{3}{2}\right)^N.
\end{equation*}
The ``$\ge $'' sign is obtained from Lemma \ref{LemmaUeberWachstumAusserhalbS}, setting $x=\gamma_2(x_0)\notin S$ (by assumption on $\gamma_2$). The first ``$>$'' is justified again by the observation that $\gamma_2(x_0)\notin S$ and the definition of $S$. The second ``$>$'' sign finally is justified by $c\ge 1$ (remember we made it that way).
\end{proof}
\begin{Proposition}\label{PropositionOnLogarithmicGrowth}
Under the above assumptions, there exist constants $c'>0$ and $k\in\bbn$ such that for all sufficiently big positive reals $B$ one has
\begin{equation}
\cardop\{\gamma\in\Gamma\mid h(\gamma (x_0))\le B\}\le c'\cdot B^k.
\end{equation}
\end{Proposition}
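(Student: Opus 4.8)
The plan is to turn the exponential lower bound of Lemma~\ref{LemmaOnExponentialGrowth} into an upper bound on word lengths, and then count words of bounded length. Concretely, if $\gamma\in\Gamma$ satisfies $h(\gamma(x_0))\le B$ and $\lVert\gamma\rVert>n_0$, then Lemma~\ref{LemmaOnExponentialGrowth} gives $\left(\frac{3}{2}\right)^{\lVert\gamma\rVert-n_0}<B$, hence
$$\lVert\gamma\rVert<n_0+\frac{\log B}{\log(3/2)}.$$
So every such $\gamma$ has word length at most $n_0+\frac{\log B}{\log(3/2)}$, and the finitely many $\gamma$ with $\lVert\gamma\rVert\le n_0$ contribute only a bounded additive constant.

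Next I would count words of bounded length in the free monoid $\Gamma$ on $n$ generators. There are exactly $n^r$ words of length $r$, hence at most $\sum_{r=0}^{R}n^r\le n^{R+1}$ words of length $\le R$. Plugging in $R=\big\lfloor n_0+\frac{\log B}{\log(3/2)}\big\rfloor$ gives
$$\cardop\{\gamma\in\Gamma\mid h(\gamma(x_0))\le B\}\le n^{R+1}\le n^{\,n_0+2}\cdot n^{\log B/\log(3/2)}=n^{\,n_0+2}\cdot B^{\log n/\log(3/2)}.$$
Thus one may take $k$ to be any integer $\ge\log n/\log(3/2)$ and $c'=n^{\,n_0+2}$ (absorbing the bounded number of short words into $c'$ by enlarging it), which proves the claim for all sufficiently large $B$. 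When $n=1$ the exponent is $0$ and the statement is trivial, so one may harmlessly assume $n\ge 2$.

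There is essentially no obstacle here: the only point requiring a word of care is the bookkeeping at small word length — the inequality from Lemma~\ref{LemmaOnExponentialGrowth} is only valid for $\lVert\gamma\rVert>n_0$, so the $\gamma$ with $\lVert\gamma\rVert\le n_0$ (of which there are at most $\frac{n^{n_0+1}-1}{n-1}$, independent of $B$) must be handled separately and folded into the constant $c'$. Everything else is the elementary estimate $\sum_{r\le R}n^r\le n^{R+1}$ together with the substitution $n^{cR}=B^{c'}$ for the appropriate exponents; since we only claim the bound for sufficiently large $B$, rounding issues in the floor are immaterial.
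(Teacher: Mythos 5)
Your proof is correct and follows essentially the same route as the paper's: convert the exponential lower bound of Lemma~\ref{LemmaOnExponentialGrowth} into the word-length bound $\lVert\gamma\rVert\le n_0+\log B/\log(3/2)$ and then count words in the free monoid. The only cosmetic differences are that the paper carries out the chain of inequalities with the sets of $\gamma$ directly (so the short words with $\lVert\gamma\rVert\le n_0$ are automatically covered by the inclusion, rather than tracked as a separate additive term) and obtains the slightly tighter constant $c'=n^{n_0+1}$ instead of your $n^{n_0+2}$; neither affects the result.
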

\begin{proof}
Since $\Gamma$ is free on $r$ generators, we get that
\begin{equation*}
\cardop\{\gamma\in\Gamma\mid\lVert\gamma\rVert\le C\}=\sum_{\nu =0}^{\lfloor C\rfloor } r^{\nu }\le r^{C+1}
\end{equation*}
if $r\ge 2$; for $r=1$ we get the even simpler estimate $\lfloor C\rfloor +1$ that will also do the job. We assume from now on that $r\ge 2$ since the calculation for $r=1$ is even easier.

By Lemma \ref{LemmaOnExponentialGrowth}, we find
\begin{equation*}
\begin{split}
\cardop\{\gamma\in\Gamma\mid h(\gamma (x_0))\le B\}
&\le\cardop\{\gamma\in\Gamma\mid\left(\frac{3}{2}\right)^{\lVert\gamma\rVert -n_0}\le B\}\\
&=\cardop\{\gamma\in\Gamma\mid (\lVert\gamma\rVert -n_0)\log\frac{3}{2}\le\log B\}\\
&=\cardop\{\gamma\in\Gamma\mid \lVert\gamma\rVert\le n_0+\frac{\log B}{\log\frac{3}{2}}\}\\
&\le r^{n_0+\log B /\log\frac{3}{2}+1}=r^{n_0+1}\cdot B^{\log r/\log\frac{3}{2}},
\end{split}
\end{equation*}
so that setting $c'=r^{n_0+1}$ and $k=\lceil\log r/\log\frac{3}{2}\rceil$ will yield the desired estimate.
\end{proof}
\begin{Theorem}\label{LastTheorem}
Let $K$ be a number field and $(f_1,\ldots ,f_n)\in\mathrsfs{R}(K)^n$ an injective family for the initial value $x_0\in\bbp^1(K)$. Assume that $\deg f_i\ge 2$ for all $1\le i\le n$. Let $\Gamma\subset\mathrsfs{R}(K)$ be the submonoid generated by the $f_i$. Then the image $\Gamma (x_0)\subseteq\bbp^1(K)$ has height density zero.
\end{Theorem}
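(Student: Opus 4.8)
The plan is to combine the counting estimate of Proposition~\ref{PropositionOnLogarithmicGrowth} with the asymptotics of Theorems~\ref{AsymptoticsForQ} and \ref{AsymptoticsForK}. The key point is that the image $\Gamma(x_0)$ is, by injectivity of the family, in bijection with $\Gamma$ itself via $\gamma\mapsto\gamma(x_0)$; so counting points of $\Gamma(x_0)$ of bounded height is the same as counting elements $\gamma\in\Gamma$ with $h(\gamma(x_0))$ bounded. Thus I would translate a height bound $H(x)\le N$ into the logarithmic bound $h(x)\le\log N$, apply Proposition~\ref{PropositionOnLogarithmicGrowth} with $B=\log N$, and obtain
\begin{equation*}
\cardop\{x\in\Gamma(x_0)\mid H(x)\le N\}=\cardop\{\gamma\in\Gamma\mid h(\gamma(x_0))\le\log N\}\le c'\cdot(\log N)^k
\end{equation*}
for all sufficiently large $N$.

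Next I would compare this with the denominator in the definition of height density. By Theorem~\ref{AsymptoticsForQ} (if $K=\bbq$) or Theorem~\ref{AsymptoticsForK} (if $d_K>1$), the total count $\cardop\{x\in\bbp^1(K)\mid H(x)\le N\}$ grows like a positive constant times $N^{2d_K}$, in particular it grows at least like a positive constant times $N^2$. Therefore
\begin{equation*}
\frac{\cardop\{x\in\Gamma(x_0)\mid H(x)\le N\}}{\cardop\{x\in\bbp^1(K)\mid H(x)\le N\}}\le\frac{c'\cdot(\log N)^k}{\text{(const)}\cdot N^2}\longrightarrow 0
\end{equation*}
as $N\to\infty$, since any fixed power of $\log N$ is dominated by $N^2$. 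Hence both the upper and lower height densities of $\Gamma(x_0)$ vanish, i.e. $\delta_h(\Gamma(x_0))=0$, which is the claim.

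There is essentially no obstacle here; all the real work has already been done in Proposition~\ref{PropositionOnLogarithmicGrowth}, whose proof rests on Lemma~\ref{LemmaOnExponentialGrowth} (exponential growth of heights along words) and ultimately on Theorem~\ref{TheoremSilverman} (the degree-$d$ distortion of heights under a rational map). The only points that deserve a word of care are: first, that the injectivity hypothesis is exactly what lets us replace a count of points in $\bbp^1(K)$ by a count of words in $\Gamma$ without any loss; and second, that it suffices to use the crude lower bound $\gg N^2$ for the total count rather than the precise asymptotics, so that the two cases $K=\bbq$ and $d_K>1$ can be handled uniformly. The elementary fact that $(\log N)^k=o(N^2)$ finishes the argument.
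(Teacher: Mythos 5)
Your proposal is correct and follows exactly the same route as the paper's own proof: translate the height bound $H\le N$ into $h\le\log N$, apply Proposition~\ref{PropositionOnLogarithmicGrowth} to get the $O((\log N)^k)$ bound on the orbit count, and compare with the $\asymp N^{2d_K}$ asymptotics of Theorems~\ref{AsymptoticsForQ} and \ref{AsymptoticsForK}. You spell out a bit more explicitly the role of injectivity (identifying $\Gamma$ with $\Gamma(x_0)$) and the uniform lower bound $\gg N^2$, but these are exactly the points the paper leaves implicit.
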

\begin{proof}
We translate the previous considerations back from statements about logarithmic heights into statements about heights. Since $H(x)\le N$ if and only if $h(x)\le\log N$, we see from Proposition \ref{PropositionOnLogarithmicGrowth} that there exists a positive integer $k$ with
\begin{equation*}
\cardop\{ x\in\Gamma (x_0)\mid H(x)\le N\} =O((\log N)^k).
\end{equation*}
Comparing this with Theorems \ref{AsymptoticsForQ} and \ref{AsymptoticsForK}, we see that $\Gamma (x_0)$ must have height density zero.
\end{proof}
We have seen before that in the case $K=\bbq$, for every $n\ge 2$ there exists an injective family whose orbit has positive height density and which consists of $n$ M\"{o}bius transformations. It is easy to see that we cannot get positive height density for a family consisting of just one M\"{o}bius transformation. Note, however, that Newman's map
\begin{equation*}
x\mapsto\frac{1}{1+\lfloor x\rfloor -\{ x\} },
\end{equation*}
being not terribly far apart from a M\"{o}bius transformation, gives an``injective family'' with just one element, whose orbit $\bbq_{>0}$ has height density $\frac{1}{2}$.

The last theorem tells us that we cannot get positive height density if we only work with maps of higher degree. So there remain two open questions: what about the mixed case, i.e. injective families consisting of both M\"{o}bius transformations and higher degree maps, and what about M\"{o}bius transformations in general number fields?

We conjecture that the condition ``$\deg f_i\ge 2$ for all $i$'' in Theorem \ref{LastTheorem} can be relaxed to the weaker condition ``$\deg f_i\ge 2$ for at least one $i$''. In other words, that if the orbit of an injective family has positive upper height density, then the family must consist entirely of M\"{o}bius transformations. Note that then the injectivity of the family would be a crucial condition since otherwise we could just add some higher degree maps to the Calkin-Wilf family. As to the second question, there might be interesting trees similar to the Calkin-Wilf tree already over quadratic number fields.

\end{document}